\documentclass[12pt]{article}
\usepackage{myArticle, myTools}

\usepackage{color,soul,mathrsfs}
\usepackage{bm} 
\usepackage{colortbl}      
\usepackage{enumitem}
\usepackage{booktabs}       
\usepackage{amsfonts}       
\usepackage{nicefrac}       
\usepackage{bbm}
\usepackage{algorithm}
\usepackage{algpseudocode}
\usepackage{tikz}
\usepackage{lipsum}
\usepackage{etoolbox}
\usepackage{microtype}      
\usepackage{hyperref}
\usepackage{url, mathabx}
\usepackage{csquotes}
\usepackage{subcaption}
\usepackage{pifont}
\usepackage{arydshln}
\usepackage{amsthm}
\usepackage{amssymb}

\title{Data-driven Policies For Two-stage Stochastic Linear Programs}
\author[1]{Chhavi Sharma\thanks{chhsh17@gmail.com}}
\author[1]{Harsha Gangammanavar\thanks{harsha@smu.edu}}

\affil[1]{Department of Operations Research and Engineering Management, Southern Methodist University, Dallas TX}

\def\cA{{\mathcal{A}}}
\def\cB{{\mathcal{B}}}
\def\cC{{\mathcal{C}}}
\def\cD{{\mathcal{D}}}

\def\cI{{\mathcal{I}}}

\def\cB{{\mathcal{B}}}

\newcommand{\rhsrv}{\tilde{b}}
\newcommand{\rhsobs}{b}
\newcommand{\bfalpha}{\boldsymbol{\alpha}}
\newcommand{\bfbeta}{\boldsymbol{\beta}}

\newcommand{\cmark}{\ding{51}} 
\newcommand{\xmark}{\ding{55}} 

\newif\ifpaper
\paperfalse

\begin{document}

\maketitle

\begin{abstract}
   A stochastic program typically involves several parameters, including deterministic first-stage parameters and stochastic second-stage elements that serve as input data. These programs are re-solved whenever any input parameter changes. However, in practical applications, quick decision-making is necessary, and solving a stochastic program from scratch for every change in input data can be computationally costly. This work addresses this challenge for two-stage stochastic linear programs (2-SLPs) with varying right-hand sides for the first-stage constraints. We construct a Piecewise Linear Difference-of-Convex (PLDC) policy by leveraging optimal bases from previous solves. This PLDC policy retains optimal solutions for previously encountered parameters and provides high-quality solutions for new right-hand-side vectors. Our proposed policy directly applies to the extensive form of the 2-SLP. When stage decomposition algorithms, such as the L-Shaped and Stochastic Decomposition, are applied to solve the 2-SLPs, we develop L-Shaped- and Stochastic-Decomposition-guided static procedures to train the policy. We also develop a sequential procedure that iteratively tracks the quality of the learned policy and incorporates new basis information to improve it. We assess the performance of our policy through analytical and numerical techniques. Our compelling experimental results show that the policy prescribes solutions that are feasible and optimal for a significant percentage of new instances.
\end{abstract}
\begin{history}
    Current version: \today
\end{history}

\section{Introduction}
In this work, we study the two-stage stochastic linear programming (2-SLP) problem of the following form
\begin{subequations}  \label{eq:2slp}
\begin{align} \label{eq:2slp-first-stage}
    \min~& f(x) \coloneqq c^\top x+ \expect{Q(x,\rv)}{\PP}\\
    \text{s.t.}~& \ x \in \set{X} \coloneqq \{x \in \RR^{d_x}~|~Ax = b, x \geq 0\}, \nonumber 
\end{align}
where $Q(x, \obs)$ is the value of a second-stage linear program, 
\begin{align} \label{eq:2slp-second-stage}
    Q(x,\obs) = \min_{y \in \mathbb{R}^{d_y}}~& q^\top y \\
    \text{s.t.}~& Wy = h(\obs)-T(\obs)x, \ y \geq 0. 
\end{align}
\end{subequations}
The decisions in 2-SLP are made in two stages. The decision $x$, known as a first-stage decision, is made before the random variable $\rv$ is realized. After implementing the decision $x$, a scenario of $\rv$ is observed, and the second-stage decision $y$ is determined. Without loss of generality, we assume that $A$ is of full row rank. The random vector represents the stochastic quantities that model the second stage, viz.,  $h \in \RR^{m_2}$, and $T \in \RR^{m_2 \times d_x}$. Throughout this article, we denote the random quantities using a tilde, such as $\rv$, and their observations without a tilde, as $\obs$. The 2-SLP has a diverse set of applications ranging from supply chain, power systems, telecommunications networks, pollution control, and chemical processes \citep{Wallace.Ziemba-2005}. The above problem is parametrized by deterministic quantities $A \in \RR^{m_1 \times d_x}, b \in \RR^{m_1}$, and $c \in \RR^{d_x}$, and the probability space associated with the random vector $\rv$ that we denote by $(\rvset, \Sigma_{\obs}, \PP)$. Typically, these parameters represent the available data to instantiate the 2-SLP problem. In this work, we study a setting in which we repeatedly need to solve 2-SLPs with parameters that may change over time. 

The field of stochastic programming (SP) has excellent methods to efficiently solve \eqref{eq:2slp} for given parameter values $(c,b, A,\PP)$, such as the L-Shaped \citep{Van.Wets-1969}, proximal bundle \citep{Ruszczynski-1986} and stochastic decomposition (SD) methods \citep{Higle.Sen-1994}. However, repeatedly solving a 2-SLP can be computationally expensive, even with efficient algorithms. Indeed, the most challenging component in solving 2-SLP is iteratively computing an approximation of the expectation-valued recourse function $\expect{Q(x,\rv)}{\PP}$. Even for small scenario sizes, solving the second-stage subproblems to obtain high-quality approximations of the recourse function and using these approximations to identify a here-and-now decision can be time-consuming. In many applications where optimization problems must be solved repeatedly and quickly, identifying optimal solutions from scratch under stringent deadlines is impractical. We identify one such example later in this section. Fortunately, these frequently solved decision problems share a common optimization structure, in which only some parameters are updated. This setting raises an interesting question: How can we leverage information from prior executions of SP algorithms to quickly identify implementable solutions for 2-SLP as its parameters change over time? This work addresses the question of when the \textit{right-hand sides of the first-stage constraints change}. Specifically, we model the first-stage right-hand-side vector as a random variable $\rhsrv$ associated with a probability measure $(\cB,\Sigma_\rhsobs,\mathbb{Q})$, where $\set{B} \subset \RR^{m_1}$ is a compact set. If the 2-SLP problem is feasible for almost all $\rhsobs \in \set{B}$, then given the information collected over the past $n$ executions of an SP algorithm to solve 2-SLP with possibly different observations of $\rhsrv$, we aim to identify a \emph{policy} that maps a right-hand-side vector to the optimal solution of the corresponding 2-SLP problem. 

Recall that every 2-SLP with finite support of second-stage uncertainty can be equivalently formulated as a linear programming (LP) problem, commonly referred to as the extensive scenario form in the SP literature \citep{ruszc03bk}, whose size scales with the number of second-stage scenarios.  For deterministic LP, \cite{walkup1969lifting} has formally demonstrated that the optimal solution is a continuous piecewise linear (PL) function of its right-hand side. In the ideal case, we can use this function to obtain an optimal solution whenever we encounter a new right-hand side. However, finding its closed-form expression requires identifying optimal basis matrices of the extensive-scenario LP for all possible right-hand sides. Such an endeavor is computationally expensive for large-scale stochastic programs that arise in real-world applications. Alternatively, we can solve the following optimization problem:
\begin{align} \label{eq:risk_neutral}
\min_{\phi \in \Phi_{\text{PL}}}~& \expect{f(\phi(\rhsrv))}{\rhsrv} \\
\text{s.t.}~& \ A\phi(\rhsrv) = \rhsrv, \phi(\rhsrv) \geq 0 \ \text{for almost every} \ \rhsrv, \notag
\end{align} 
where $\Phi_{\text{PL}}$ denotes the family of vector-valued continuous piecewise linear functions defined from $\RR^{m_1}$ to $\RR^{d_x}$. Problem \eqref{eq:risk_neutral} has at least one solution because the optimal solution of \eqref{eq:2slp} is a piecewise linear function of the right-hand side. A solution $\phi^\star$ of \eqref{eq:risk_neutral}, hereafter referred to as an optimal policy, can be used to get the optimal solutions of 2-SLP problems with varying right-hand sides. Clearly, both the objective and the constraints in \eqref{eq:risk_neutral} are nonconvex, rendering the problem intractable. Constraint-penalized approaches can be used to solve \eqref{eq:risk_neutral} by incorporating the constraints into the objective function via a suitable penalty term. Then, we can optimize the resulting penalized problem over the class of piecewise-linear functions using the algorithms developed, for instance, by \cite{Zhang.Liu.Zhao-2023}. However, penalization-based approaches are sensitive to the choice of the penalty parameter and may yield infeasible solutions. In either case, a further complication is an incomplete knowledge of the objective $f(\cdot)$ in 2-SLP, as it requires all the pieces of the polyhedral function $\expect{Q(x,\rv)}{\PP}$.

The 2-SLP problem has nice structural properties regarding the objective function and the relationship between the optimal basis and the right-hand sides. These properties can inform the number of pieces in the PL policy and allow us to leverage information from prior executions of a solution algorithm. Thus, we will bypass the intractable nonconvex problem \eqref{eq:risk_neutral} to design a PL policy that efficiently delivers high-quality solutions whenever a new right-hand side is observed.

\subsection{Relations to Parametric Optimization and Decision Rules}
The 2-SLP can be viewed as a parametric stochastic program, which under finite support, is equivalent to a parametric LP. Sensitivity analysis is a fundamental approach for post-optimality analysis of the current basis of a parametrized LP problem \citep{bertsimas1997introduction}. Suppose a solver produces an optimal basis $\bfB$ of an LP problem at right-hand side $b$. Sensitivity analysis tells us that $\bfB$ remains optimal within a range of perturbations on $b$. Beyond this sensitivity range, a new basis yields an optimal solution. This tool is useful for understanding how changes to parameters affect the optimal basis. Our policy design builds upon this understanding to address the issue of repeatedly solving large-scale, complex stochastic programs. 

The 2-SLP problems are $\#$P-hard \citep{Dyer.Stougie-2006} and intractable even when seeking medium-accuracy solutions \citep{Shapiro.Nemirovski-2005}.  Several previous works have addressed parametric SP \citep{Robinson.Wets-1987, Dupavcova-1990, Royset.Chen.Louis-2025}. However, they stem from the stability analysis of optimal values and solutions under perturbations in the underlying distribution of second-stage uncertainty. Moreover, these works are analytical in nature, and their adoption into a solution algorithm, while appropriate, has not yet been addressed to the best of our knowledge. Another related stream of work in SP concerns the decision rules \citep{Kuhn.Wiesemann.Georghiou-2011, Bertsimas.Goyal-2012, Georghiou.Wiesemann.Kuhn-2015, Bertsimas.Bidkhori-2015, Georghiou.Kuhn.Wiesemann-2019}, where the aim is to address the computational difficulty of solving SP by assuming that the second-stage decision is either a linear or a nonlinear function of uncertainty scenarios $\obs$. The SP problem is then solved by replacing the second-stage decision with an appropriate mapping. The setting we study in this paper is distinct from that addressed in previous works on decision rules. We aim to find decision rules (which we call policies) for 2-SLPs that help repeatedly solve perturbed SP problems. We do not impose any restrictions on the behavior of second-stage decisions in relation to the observed scenario. Moreover, we concentrate on the propagation of information across different executions of the 2-SLP algorithms. 

A relevant study for our setting is the recent work \citep{royset2023risk} that designs decision rules for mixed-binary parametric optimization problems. This work lays a solid theoretical foundation for studying the asymptotic behavior of decision rules derived from training problems. However, two major difficulties prevent us from using their decision rules. The first is that it lacks guidance on how to solve the sequence of nonconvex training problems of the form \eqref{eq:risk_neutral}, which is required to adopt their approach in our setting. The second bottleneck arises because their feasible decision rules are prescribed for problems where the constraints are independent of the parameters, whereas we deal with varying constraint parameters in our 2-SLP setting.

\subsection{Contributions}
In this paper, we design data-driven Piecewise Linear Difference-of-Convex (PLDC) policies to solve 2-SLP problems with adaptive right-hand sides of the first-stage constraints. These policies also apply to deterministic LP problems that must be solved repeatedly, with right-hand sides varying across instantiations. Since our goal is to devise a policy that delivers near-optimal or at least feasible solutions to 2-SLP problems, which are repeatedly solved in practice, the process of finding such a policy should be simpler than that of solving the 2-SLP problems from scratch. In this regard, we summarize the main contributions of this paper below:

\begin{enumerate}
    \item We utilize the properties of the optimal solution map and difference-of-convex functions to design a linear training problem. A solution to this training problem generates the desired PLDC policy for deterministic LP problems. The training problem is constructed using data comprising right-hand sides and the optimal solution and optimal basic variables of the corresponding LP. We analytically show that our proposed policy yields an optimal solution whenever the right-hand-side vector lies in the convex hull of the right-hand sides in the training data. This part of the contributions is presented in section \ref{sec:pldc_lp}.
    \item In section \ref{sec:pldc_2slp}, we adopt the PLDC policy of deterministic LP to 2-SLP problems solved using the L-Shaped or SD methods as solution algorithms. We devise a static procedure that constructs a consolidated master problem by reusing the local polyhedral approximations and terminal solutions generated by executions of the decomposition algorithm to solve a fixed number of 2-SLPs with different right-hand sides. Based on this consolidated problem, we formulate a training problem whose solution yields the parameters of the PLDC policy for 2-SLPs. It is also worth noting that the resulting PLDC policy-training problem can be solved using off-the-shelf LP solvers.
    \item Next, in section \ref{sec:seqproc}, we develop a unified decomposition algorithm-guided sequential procedure using either L-Shaped or SD methods. This sequential procedure iteratively feeds information from new solves to train the policy iteratively and tracks its quality. We maintain a training dataset that includes the right-hand sides, optimal solutions, optimal values, and local approximations. Adding new information to the training data increases the size of the training problem, which we control by appending it to the training set from 2-SLPs that are infeasible or suboptimal under the current policy. We prove that the sequential procedure returns a viable policy in a finite number of rounds under reasonable assumptions.
    \item We test our PLDC policies on multiple 2-SLP problems, ranging from small to large scale. This computational study demonstrates that our proposed policies retain the optimal solutions from previous solves and produce optimal solutions for most new instances. Experiments with the sequential procedure show that it arms the training process to efficiently incorporate new information, thereby improving the quality of the policy-prescribed solutions in an online manner.
\end{enumerate}
The main goal of designing policies that prescribe optimal or near-optimal solutions to large-scale SP problems within a reasonable time is to enable the practice of operations research tools, particularly SP models and algorithms, in real-world settings. We identify one such application area below.

\subsection{A Motivating Example} \label{app:motivatingexample}
Economic dispatch (ED) is a key problem that power system operators solve to plan electricity generation from various sources, including hydro, thermal (e.g., coal or gas), and renewable (e.g., wind and solar). The goal is to determine the amount of electricity each generating unit should produce to minimize total operating cost. This planning problem comprises several constraints, including the system flow-balance equation; physical constraints on generators (capacity, ramping, etc.) and transmission lines (capacity, power flows, etc.); and water-conservation equations for the reservoirs. The stochastic variant of the ED problem can be cast as a 2-SLP (see \cite{gangammanavar2015stochastic}), in which the first-stage decision is to determine the generation levels of slow-responding generators fueled, for instance, by coal and nuclear. Once the actual demand is revealed, the second stage determines the generation levels of fast-responding gas generators needed to align total generation with the realized demand and determine the power flows through the transmission network. This 2-SLP problem includes several parameters, such as electricity price, resource availability, and weather-related factors, which are integral to the model. In practice, the ED problem is solved several times a day. For instance, the California Independent System Operator must solve the ED problem every 5 minutes \cite{caiso2022report}. At this timescale, the parameters change continuously due to market fluctuations and varying weather conditions. Thus, it becomes necessary to re-solve a 2-SLP problem whenever any of these parameters change, making the stochastic ED an excellent example of the decision setting we study in this paper. Similar decision settings also arise in applications involving production planning, scheduling, resource allocation, and inventory control. The current state-of-the-art algorithms are incapable of delivering high-quality solutions in the desired timeframe. For instance, the SD algorithm takes about 20 minutes to deliver a statistically optimal solution to an ED instance for a medium-sized power network, as reported in \cite{gangammanavar2015stochastic}. This experience motivates us to avoid solving these SP problems from scratch and instead explore policy designs that are easy to train and execute within the tight time bounds.

\subsection{Notations and Background}
Throughout this paper, we assume that the scalars, sets, vectors, and matrices in a collection are denoted by the indices in their superscripts. For example, vector $b^i$ denotes the $i$-th right-hand-side vector, $B^i$ denotes $i$-th matrix and $I^i$ denotes $i$-th set of indices $\{i_1,i_2,\ldots \} \subseteq \{1,2,\ldots,d_x\}$. We use subscripts to denote components of a vector, e.g., $b^i_k$ denotes the $k$-th component of $b^i$. The boldface symbols $\mathbf{0}$ and $\mathbf{1}$ denote, respectively, vectors of zeros and ones of appropriate dimensions, as will be clear from the context. Let $\mathbf{0}_n$ denote a square matrix of size $n$ with all zero entries. An optimal solution and the corresponding optimal value of \eqref{eq:2slp} at a right-hand side $b$ are denoted by $x^\star(b)$ and $v^\star(b)$ respectively. For any positive integer $a$, $[a]$ denotes a set of indices $\{1,2,\ldots,a\}$. We denote the convex hull of a finite set $\set{S} = \{s_1,s_2,\ldots,s_n \}$ by $\text{Conv}(\set{S}) = \{\lambda_1s_1+\lambda_2s_2+\ldots+\lambda_ns_n \ \vert \ \sum_{i=1}^n \lambda_i = 1, \lambda_i \geq 0 \}$. A continuous function $f:\RR^{m} \rightarrow \RR^{d}$ is piecewise linear if there exists a finite set of linear functions $f_i(x) = D^ix$, $i= 1,\ldots, K$ such that $f(x) \in \{f_1(x),\ldots,f_K(x)\}$ for all $x \in \RR^m$ \citep{scholtes2012introduction}. A function $f:\RR^{m} \rightarrow \RR^{d}$ is called a difference-of-convex function if there exists convex functions $g_1:\RR^{m} \rightarrow \RR^{d}$ and $g_2:\RR^{m} \rightarrow \RR^{d}$ such that $f(x) = g_1(x) - g_2(x)$ for all $x \in \RR^{m}$, where difference is defined componentwise \citep{Le.Hoai.Pham-2018}.

\section{Policy Design} \label{sec:pldc_lp}
In this section, we construct a policy for deterministic LP problems with adaptive right-hand sides. We focus on a deterministic LP to communicate the key ideas involved in policy design and construction to the reader without complicating the notation and adapt the steps to 2-SLP in the subsequent sections of the paper. To this effect, we consider the standard form of a deterministic LP problem: $ \min~\{c^\top x~|~x \in \set{X}\}$. 
Notice that we retain the notations and form of the first-stage program in \eqref{eq:2slp-first-stage}. The optimal solutions of LP problems are extreme points of the feasible region, also called basic feasible solutions. We can partition the solution vector into basic variables, indexed by $B$ and denoted by $x_B$, and nonbasic variables, indexed by $N$ and denoted by $x_N$. The columns of the constraint matrix $A$ corresponding to basic variables constitute the basis that we denote by $\bfB$. If $x$ is an optimal solution, then we term the corresponding matrix $\bfB$ as the optimal basis. Using the basis matrix, the basic feasible solution is given by $x_B = \bfB^{-1} b, \  x_N = 0$.

When some or all entries of the right-hand-side vector $b$ are changed, say to a new vector $b^\prime$, the perturbation theory of LP \citep{bertsimas1997introduction} says that the current basis $\bfB$ is optimal to $b^\prime$. Typically, the post-optimality sensitivity analysis reveals the sensitivity range when each element is perturbed in turn. In particular, the basis $\bfB$ remains optimal as long as $\bfB^{-1}b^\prime$ is nonnegative. Beyond a certain level of perturbation, a new basis matrix, say $\bfB^\prime$, yields the optimal solution. Hence, the optimal solution of an LP problem is a continuous, piecewise-linear function of the right-hand-side vector. In other words, $\phi^*$ is a continuous, piecewise linear function. This result is well established in \cite{walkup1969lifting} for parametric LPs.

More formally, following the Basis Decomposition Theorem of \cite{walkup1969lifting}, we can decompose the closed convex polyhedral cone $\text{pos}(A) \ = \ \{b \ \vert \ b = Ax, x \geq 0 \}$ into a finite number, say $\bar{L}$, of closed convex polyhedra, which are referred to as cells. The $m_1$ columns of $A$ corresponding to the edges of an $m_1$-dimensional cell $\cC$ form an optimal basis for all $b \in \cC$. If $\bfB(\cC)$ is the optimal basis corresponding to cell $\cC$, then a linear piece given by $(\bfB(\cC))^{-1}b$, which we term the optimal piece, recovers the optimal value of basic variables for all $\rhsobs \in \cC$. Moreover, since the $\bar{L}$ cells cover the entire $\text{pos}(A)$, the corresponding bases, along with the trivial zero function for non-basic variables, provide the means to construct the optimal piecewise linear policy $\phi^*$. Since the number of cells is finite, the policy $\phi^*$ has a finite number of pieces.

\subsection{Data-driven Policy Approximation}
Obtaining a closed-form expression of the optimal piecewise policy $\phi^*$ requires discovering all possible bases, a number that scales exponentially in the problem dimension. This step is computationally prohibitive, particularly for large-scale LP models commonly encountered in real-world applications and in SP problems that we encounter in later sections. Alternatively, we can adopt a data-driven approach that uses the optimal bases obtained by solving LP problems for a finite set of right-hand-side parameters, say $n$. Let $\widehat{\set{B}} \coloneqq \{b^j\}_{j \in [n]}$ denote the finite collection of right-hand sides and $\widehat{\set{X}} \coloneqq \{x^\star(b^j)\}_{j \in [n]}$ the optimal solution vectors at these right-hand sides. Furthermore, let $\{\bfB^\ell\}_{\ell=1}^L$ denote the collection of optimal bases that we discover (notice $ L \leq n$ and $L \leq \bar{L}$). We construct approximate cells by placing the right-hand sides $\widehat{\set{B}}$ that have the same optimal basis $\bfB^\ell$ into a cell $\cC^\ell$, which we define as $\cC^\ell = \{ b \in \RR^{m_1} \ \vert \ x_B^\star(b) = (\bfB^\ell)^{-1}b  \} \subseteq \widehat{\set{B}}$. Figure \ref{fig:celldemo} illustrates such cells for an LP problem with two equality constraints and five variables. The right-hand-side vectors belonging to the same cell are plotted using the same marker (and color).

Using the above elements, we construct a data-driven approximate policy $\hat{\phi}: \set{B} \rightarrow \RR^{d_x}$ such that for any $b \in \set{B}$ as
\begin{align}
    \big( \hat{\phi}(b) \big )_k & = \begin{cases} \big((\bfB^{\ell^*})^{-1}b \big)_k & \text{if } k \in B^{\ell^*}\\ 0 & \text{if } k \in N^{\ell^*}, \end{cases} \label{eq:optimalpolicy}
\end{align}
where $\ell^* \in \{ \ell \ \vert \ (\bfB^{\ell})^{-1}b \geq 0, \ \ell \in [L] \}$. Notice that the basis $\bfB^{\ell^*}$ is always optimal since the cost coefficients are unchanged. It is also worthwhile to note that $\bfB^{\ell^*}$ remains optimal for any $\rhsobs \in \text{Conv}(\cC^{\ell^\star})$, even if $\rhsobs \notin \widehat{\set{B}}$. On the contrary, since we may have discovered only a subset of optimal bases in our data-driven setting, we may encounter a case where no such $\ell^*$ is found. As a consequence, all bases in the set $\{\bfB^\ell\}_{\ell=1}^L$ are infeasible to a right-hand side $\rhsobs$. Therefore, $\hat{\phi}$ is only an approximation of the true policy $\phi^\star$. 
\begin{figure}[!tb]
    \centering
    \includegraphics[width=0.5\linewidth]{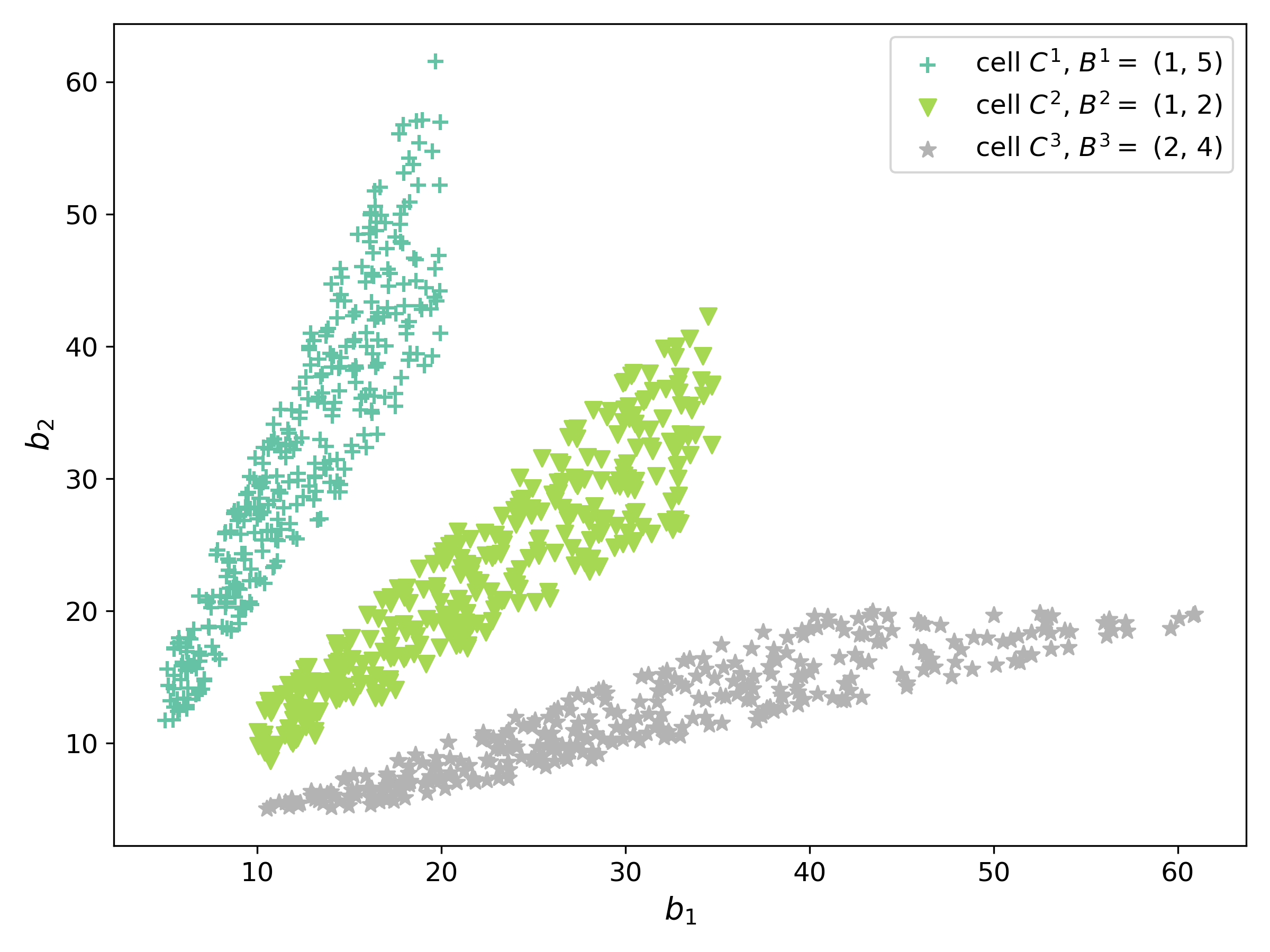}
    \caption{ \centering (Color online) Approximate cells for an LP problem with two constraints and five variables. $b_1$ and $b_2$ denote the right-hand-side values of the first and second constraints, respectively.}
    \label{fig:celldemo}
\end{figure}

While $\hat{\phi}$ is a viable policy, it requires considerable memory to store the dense inverses of the bases. Alternatively, one can store the column indices of the coefficient matrix $A$, and, upon observing a new $\rhsobs$, compute the matrix inverse. Clearly, performing inverse computations and conducting feasibility checks for every new $\rhsobs$ also entails high costs. In what follows, we leverage the piecewise-linear relationship between the optimal solutions and the right-hand sides of the constraints to construct an alternative characterization of the policy $\hat{\phi}$. This characterization yields a functional form that does not require feasibility checks or the storage or computation of the bases' inverses.

To establish such a characterization, we exploit the fact that we can express a piecewise-linear function as a difference-of-convex function \citep{kripfganz1987piecewise,scholtes2012introduction}. That is, we can represent a piecewise linear vector-valued function $\phi(b)$ as $\phi(b) = \phi_1(b) - \phi_2(b)$ where $\phi_1:\RR^{m_1} \rightarrow \RR^{d_x}$ and $\phi_2:\RR^{m_1} \rightarrow \RR^{d_x}$ are convex piecewise-linear functions componentwise. In our context, we aim to discover the convex functions $\phi_1$ and $\phi_2$ that we can use to reconstruct the optimal policy $\phi^*$ or at least its data-driven approximation $\hat{\phi}$. We can deem such a reconstruction as a success if, when restricted to $\text{Conv}(\cC^\ell)$, $\phi(b)$ recovers the piece $(\bfB^\ell)^{-1}b$ componentwise for basic variables and takes zero value for nonbasic variables. In this case, $(\phi(b))_k = (\phi_1(b))_k - (\phi_2(b))_k = ((\bfB^\ell)^{-1}b)_k $ for $b \in \text{Conv}(\cC^\ell)$ and $k \in B^\ell$. A key observation is that $\phi_1$ and $\phi_2$ can each have multiple pieces and still yield a linear piece, since the difference between two piecewise linear functions can be linear. That is, a cell may have more than one piece in $\phi_1$ and $\phi_2$. However, we show in Lemma \ref{lem:single_subgrad_existence} below that there exist \textit{convex} functions $\phi_1$ and $\phi_2$ such that, restricting them to $\text{Conv}(\cC^\ell)$, they are linear functions. We prove such existence in terms of one subgradient for all right-hand sides belonging to a given cell $\cC^\ell$. 
\begin{lemma} \label{lem:single_subgrad_existence}  Let $\phi(b) = \phi_1(b) - \phi_2(b)$ be a continuous piecewise-linear function, and let the right-hand side set $\widehat{\set{B}}$ be partitioned into $L$ cells. For every cell $\cC^\ell$, there exists $u^{\ell} = [u^\ell_1;\ldots;u_{d_x}^\ell] \in \RR^{d_x \times m_1}$, $v^{\ell} = [v^\ell_1;\ldots;v^\ell_{d_x}] \in \RR^{d_x \times m_1}$ such that $u^\ell_k \in \partial (\phi_1(b))_k$ and $v^\ell_k \in \partial (\phi_2(b))_k$ for all $b \in \text{Conv}(\cC^\ell)$ and $\ell \in [L]$.
\end{lemma}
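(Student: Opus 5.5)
The plan is to choose the decomposition $\phi=\phi_1-\phi_2$ ourselves, with both $\phi_1$ and $\phi_2$ affine on every Walkup cell. Since each $\text{Conv}(\cC^\ell)$ lies inside a single cell, a gradient of each component on that cell is then a common subgradient for all $b\in\text{Conv}(\cC^\ell)$.

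\emph{Step 1: each $\text{Conv}(\cC^\ell)$ lies in one cell.} Take $\phi$ to be the optimal piecewise-linear policy built from the Basis Decomposition Theorem of \cite{walkup1969lifting}. All points of $\cC^\ell$ share the optimal basis $\bfB^\ell$, so $(\bfB^\ell)^{-1}b\ge 0$ at each of them. This inequality is preserved under convex combinations, so it holds on $\text{Conv}(\cC^\ell)$. As noted before the lemma, $\bfB^\ell$ is then optimal on all of $\text{Conv}(\cC^\ell)$, and $\phi$ equals the single linear map $b\mapsto D^\ell b$ there, where $D^\ell$ is $(\bfB^\ell)^{-1}$ padded with zero rows for the nonbasic variables. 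Thus $\text{Conv}(\cC^\ell)$ is contained in a closed polyhedral cell of $\text{pos}(A)$ on which $\phi$ is linear.

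\emph{Step 2: the convexifier.} I would use the optimal value function $v^\star(b)=\min\{c^\top x \mid Ax=b,\ x\ge 0\}$ as the convexifier. It is convex and piecewise linear on $\text{pos}(A)$, with $v^\star(b)=\pi^\top b$ for the optimal dual $\pi$ of the cell. Its regions of linearity are exactly the cells of the (regular) subdivision induced by lifting the columns of $A$ to heights $c$. Across any common facet of two adjacent cells, the directional derivative of $v^\star$ in the facet normal jumps by some amount $\delta>0$, and since there are finitely many facets we may take $\delta$ to be the minimum of these jumps. Each component $\phi_k$ is continuous and piecewise linear on the same subdivision, so its gradient jump across any facet is bounded by some $\Gamma<\infty$. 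Choose $M>\Gamma/\delta$ and set
\begin{align*}
(\phi_2(b))_k = M\,v^\star(b), \qquad (\phi_1(b))_k = \phi_k(b)+M\,v^\star(b), \qquad k\in[d_x].
\end{align*}
Then $(\phi_2)_k$ is convex. The function $(\phi_1)_k$ is continuous, piecewise linear, and has a nonnegative gradient jump across every facet, so it is locally convex. Since it is defined on the convex set $\text{pos}(A)$, local convexity gives global convexity; this can be checked along line segments, where the function becomes a one-dimensional piecewise-linear function with nondecreasing slopes. By construction $\phi=\phi_1-\phi_2$.

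\emph{Step 3: the common subgradients.} On the cell containing $\text{Conv}(\cC^\ell)$, the function $(\phi_2)_k$ is the linear map $b\mapsto M\pi^{\ell\top}b$, where $\pi^\ell$ is the optimal dual solution for basis $\bfB^\ell$, i.e. $\pi^{\ell\top}=c_{B^\ell}^\top(\bfB^\ell)^{-1}$. Likewise $(\phi_1)_k$ is the linear map $b\mapsto (D^\ell_k+M\pi^{\ell\top})b$, where $D^\ell_k$ is the $k$-th row of $D^\ell$. For a convex function that is affine on a polyhedron, the gradient of that affine piece is a subgradient at every point of the polyhedron, including boundary points. Hence $u^\ell_k=D^\ell_k+M\pi^{\ell\top}$ and $v^\ell_k=M\pi^{\ell\top}$ work simultaneously for all $b\in\text{Conv}(\cC^\ell)$. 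For $b$ outside $\text{pos}(A)$, $v^\star$ can be extended by any convex piecewise-linear extension (for example, the max of its affine pieces), which does not affect the argument on the cells.

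\emph{Main obstacle.} The delicate point is the strict jump $\delta>0$ in Step 2, which fails under dual degeneracy: two different bases can share the same $\pi$, so $v^\star$ is linear across a boundary where $\phi$ kinks. I would handle this by refining the cells into the regions of linearity of $v^\star$ under a generic perturbation of $c$. Equivalently, I would pick a regular triangulation refining the Walkup subdivision and use its lifting function in place of $v^\star$; this function is strictly convex across every interior facet. Each $\text{Conv}(\cC^\ell)$ still lies inside a region where $\phi$ is linear, so Step 3 goes through unchanged. If even this fails, the fallback is an abstract DC construction for piecewise-linear functions in the spirit of \cite{kripfganz1987piecewise}, taking care that the convexifier chosen is affine on each $\text{Conv}(\cC^\ell)$.
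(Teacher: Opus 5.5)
Your main construction takes a genuinely different route from the paper. The paper takes $(\phi_1)_k$ to be a Kripfganz--Schulze sum of once-folded convex functions, asserts that each folds only where two neighboring cells meet, and sets $(\phi_2)_k=(\phi_1)_k-\phi_k$. You instead use the scaled value function $Mv^\star$ as an explicit convexifier, which gives the concrete subgradients $u^\ell_k=D^\ell_k+M\pi^{\ell\top}$ and $v^\ell_k=M\pi^{\ell\top}$. When the LP is dual nondegenerate, your argument is correct; in the convexity check, segments through lower-dimensional faces are handled by density and continuity. In that case the primal optimum is unique, the optimal bases triangulate $\text{pos}(A)$, and adjacent cells carry distinct duals, so the cells are exactly the linearity regions of $v^\star$.

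There is, however, a genuine gap in the degenerate case, which the lemma does not exclude, and your patch for it does not work.

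\begin{itemize}
\item Step 3 needs the \emph{convexifier}, not $\phi$, to be affine on $\text{Conv}(\cC^\ell)$. So ``each $\text{Conv}(\cC^\ell)$ still lies inside a region where $\phi$ is linear'' is the wrong criterion.
\item The Walkup cells are already simplicial cones $\text{pos}(\bfB^\ell)$. Any regular triangulation that properly refines them has a lifting function that bends inside some $\text{pos}(\bfB^\ell)$, possibly inside $\text{Conv}(\cC^\ell)$.
\item A generic perturbation of $c$ selects its own optimal bases, which need not be the recorded $\bfB^\ell$. It can reproduce the given cells only if the decomposition formed by the recorded bases is itself a regular triangulation.
\item Step 1 has the same dependence. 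Optimality of $\bfB^\ell$ on $\text{Conv}(\cC^\ell)$ gives $\phi=D^\ell b$ there only if the primal optimum is unique, or if $\phi$ is defined through the recorded bases.
\end{itemize}

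Regularity can genuinely fail. If $c^\top=y^\top A$, every basis is optimal, and for $m_1\ge 3$ the recorded bases may form a non-regular triangulation. Then no shared convexifier that is affine on whole cells exists.

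To see this, take an interior facet spanned by a column set $F$ and separating $\text{pos}(F\cup\{a_p\})$ from $\text{pos}(F\cup\{a_q\})$. Write $a_p=\sum_{j\in F}\mu_j a_j+\mu_q a_q$ with $\mu_q<0$. A direct computation shows that $\phi_k$, $k\in F$, has a strictly concave kink across this facet exactly when $\mu_k>0$. Pointedness of $\text{pos}(A)$, which holds when the sets $\{x\ge 0: Ax=b\}$ are bounded, forces some $\mu_k>0$. So a common $\phi_2$ would have to be strictly convex across every interior facet while affine on every cell, and that is precisely a certificate of regularity.

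To close the gap, you have two options:
\begin{itemize}
\item add a hypothesis, such as dual nondegeneracy or regularity of the decomposition given by the recorded bases; or
\item build coordinate-dependent $(\phi_2)_k$ that only need to be affine on the sets $\text{Conv}(\cC^\ell)$.
\end{itemize}

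Your Kripfganz fallback is essentially the paper's argument and does not settle this by itself. A once-folded function folds along a whole hyperplane, which can cut through other sets $\text{Conv}(\cC^\ell)$. The affinity on each $\text{Conv}(\cC^\ell)$ that you rightly say must be ensured is exactly what the paper's assertion leaves unverified.
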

Our approach to proving this lemma is to fix a cell and show that every linear function can be represented as the difference of two linear functions. Moreover, we show that the collection of linear functions identified across all the cells forms piecewise-affine convex functions $\phi_1$ and $\phi_2$. We provide a complete proof of Lemma \ref{lem:single_subgrad_existence} in Appendix \ref{appendix:lemmaproof}.

From Lemma \ref{lem:single_subgrad_existence} and selecting a pair $(b^\ell, x^\star(b^\ell))$ from each cell $\cC^\ell$, we obtain the following characterization of the policy:
\begin{align}
(\phi(\rhsobs))_k & = \max_{\ell \in [L]} \Big((u^\ell_k)^\top (\rhsobs -b^\ell) + (x^\star(\rhsobs^\ell))_{k} + z^\ell_k  \Big)\nonumber \\ 
& \qquad - \max_{\ell \in [L]} \Big( (v^{\ell}_k)^\top (\rhsobs -b^\ell)+ z^\ell_k \Big).\label{eq:pldc_policy}
\end{align}
Here, the parameter vector $\{\theta_k \}_{k=1}^{d_x} = \{(u^{\ell(j)}_k,v^{\ell(j)}_k,z_k^j)_{j=1}^n \}_{k=1}^{d_x} \in \RR^{(2m_1L+n)d_x}$ is a solution that satisfies the following system of inequalities:
\begin{subequations} \label{eq:policyinequality}
\begin{align}
    & \inner{u^{\ell(j)}_k, b^i-b^j} + (x^\star(b^j))_k + z^j_k \nonumber \\
    & \qquad \leq  (x^\star(b^i))_k + z_k^i \ \text{for all} \ i,j \in [n],  \label{eq:phi_diff_u}  \\
    & \inner{v^{\ell(j)}_k, b^i-b^j} + z^j_k \leq z_k^i  \ \text{for all} \ i,j \in [n].  \label{eq:vz}
\end{align}
\end{subequations}
We call $\phi(b)$ a \textit{data-driven} piecewise linear difference-of-convex (PLDC) policy as it is obtained using a finite number of data points $\{b^j,x^\star(b^j) \}_{j=1}^n$. Due to the PL nature of the optimal solution map and using Lemma \ref{lem:single_subgrad_existence}, there exists at least one solution to the inequalities in \eqref{eq:policyinequality}, which we establish in Theorem \ref{thm:pldc_properties} along with policy properties.
\begin{theorem} \label{thm:pldc_properties} For a LP in the standard form, let $\widehat{\set{B}}$ and $\widehat{\set{X}}$ be given. Then, the inequalities in \eqref{eq:policyinequality} have at least one feasible solution $\{\theta_k \}_{k=1}^{d_x} = \{(u^{\ell(j)}_k,v^{\ell(j)}_k,z_k^j)_{j=1}^n \}_{k=1}^{d_x}$ and the policy $\phi(b)$ in \eqref{eq:pldc_policy} constructed from $\{\theta_k \}_{k=1}^{d_x}$ satisfies the following:
\begin{align}
    \phi(b) = x^\star(b) \ \text{for all} \ b \in \text{Conv}(\cC^\ell), \ \ell \ \in \ [L]. \label{eq:policyproperties}
\end{align}
\end{theorem}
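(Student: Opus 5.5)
The plan has two parts. First, I build one explicit feasible solution of \eqref{eq:policyinequality} from the difference-of-convex structure of the optimal solution map, using Lemma~\ref{lem:single_subgrad_existence}. Second, I show that \emph{any} feasible $\theta$ yields a policy that reproduces $x^\star$ on each $\text{Conv}(\cC^\ell)$; this uses only the inequalities and convexity.

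\textbf{Existence.} By the discussion preceding the theorem, the map $b\mapsto x^\star(b)$ (taken as the basis-induced selection $\phi^*$) is continuous piecewise linear. Hence it admits a componentwise DC decomposition $x^\star=\phi_1-\phi_2$ with $\phi_1,\phi_2$ convex piecewise linear. Lemma~\ref{lem:single_subgrad_existence} then supplies, for each cell $\cC^\ell$, matrices $u^\ell,v^\ell$ with $u^\ell_k\in\partial(\phi_1(b))_k$ and $v^\ell_k\in\partial(\phi_2(b))_k$ for all $b\in\text{Conv}(\cC^\ell)$. I set $z^j_k:=(\phi_2(b^j))_k$, so that $(\phi_1(b^j))_k=(x^\star(b^j))_k+z^j_k$. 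With $\ell(j)$ the cell of $b^j$, the subgradient inequality at $b^j$ evaluated at $b^i$ reads
\[
\inner{u^{\ell(j)}_k,b^i-b^j}+(\phi_1(b^j))_k\le(\phi_1(b^i))_k ,
\]
which is exactly \eqref{eq:phi_diff_u}. The same argument applied to $\phi_2$ gives \eqref{eq:vz}. So the system is feasible.

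\textbf{Interpolation at data points.} Now fix any feasible $\theta$ and a component $k$. I read the maxima in \eqref{eq:pldc_policy} as taken over the data indices $j\in[n]$, with slope $u^{\ell(j)}_k$ at anchor $b^j$. Define
\[
g_1(b)=\max_j\big(\inner{u^{\ell(j)}_k,b-b^j}+(x^\star(b^j))_k+z^j_k\big),\qquad g_2(b)=\max_j\big(\inner{v^{\ell(j)}_k,b-b^j}+z^j_k\big).
\]
Both are convex. At $b=b^i$, inequality \eqref{eq:phi_diff_u} says every term of $g_1$ is at most $(x^\star(b^i))_k+z^i_k$, and the term $j=i$ attains it. Hence $g_1(b^i)=(x^\star(b^i))_k+z^i_k$, and likewise $g_2(b^i)=z^i_k$. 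Therefore $\phi(b^i)=x^\star(b^i)$.

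\textbf{Extension to convex hulls.} This is the step I expect to need the most care. Take $i,j$ in the same cell $\cC^\ell$, so they share the slope $u^\ell_k$. Applying \eqref{eq:phi_diff_u} in both directions $(i,j)$ and $(j,i)$ forces equality. Consequently the single affine function
\[
A_\ell(b)=\inner{u^\ell_k,b-b^j}+(x^\star(b^j))_k+z^j_k
\]
interpolates $g_1$ at every data point of $\cC^\ell$. For $b=\sum_i\lambda_ib^i\in\text{Conv}(\cC^\ell)$ I obtain the sandwich
\[
g_1(b)\ge A_\ell(b)=\sum_i\lambda_iA_\ell(b^i)=\sum_i\lambda_ig_1(b^i)\ge g_1(b),
\]
where the first inequality holds because $A_\ell$ is one of the terms in the max, and the last by convexity. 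So $g_1=A_\ell$ on the hull, and analogously $g_2$ equals an affine function $B_\ell$ there. Thus $(\phi(b))_k=A_\ell(b)-B_\ell(b)$ is affine on $\text{Conv}(\cC^\ell)$.

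Finally, $x^\star(b)_k$ is also affine on $\text{Conv}(\cC^\ell)$. For $k\in B^\ell$ it equals $((\bfB^\ell)^{-1}b)_k$, and for $k\in N^\ell$ it is $0$; this uses that $\bfB^\ell$ stays optimal on the hull, as noted after \eqref{eq:optimalpolicy}. Two affine functions that agree at the points $b^i$ agree on their convex hull, since both sides are the same convex combination of matching values. This gives \eqref{eq:policyproperties}.
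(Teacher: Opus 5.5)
\textbf{Comparison with the paper's proof.} Your existence half is the paper's argument. You split the optimal map as $\phi_1-\phi_2$, set $z^j_k=(\phi_2(b^j))_k$, and read \eqref{eq:phi_diff_u} and \eqref{eq:vz} off the subgradient inequalities at $b^j$, using the cell-wise subgradients of Lemma~\ref{lem:single_subgrad_existence}.

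For the policy property you follow the same outline as the paper (exactness at the data, then extension to $\text{Conv}(\cC^\ell)$), but your argument is different and stronger. The paper derives within-cell tightness from the linearity of the particular $\phi_1,\phi_2$ furnished by Lemma~\ref{lem:single_subgrad_existence}. As written, it therefore covers only the $\theta$ built in the first half. It also simply asserts that at $\hat b^\ell=\sum_i\lambda^i b^{n_i}$ the first maximum equals $\sum_i\lambda^i\bigl(x^\star(b^{n_i})_k+z^{n_i}_k\bigr)$.

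You get tightness for \emph{every} feasible $\theta$ by applying \eqref{eq:phi_diff_u} and \eqref{eq:vz} in both directions with the shared slope. You then prove the hull identity by a sandwich: the cell's own affine term bounds the maximum from below, and convexity plus exactness at the data bounds it from above. This is the version actually needed, since the deployed policy comes from a minimizer of \eqref{eq:training_problem}, not from the Lemma~\ref{lem:single_subgrad_existence} construction.

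One small addition: \eqref{eq:pldc_policy} maximizes over $\ell\in[L]$ with one anchor per cell, not over $j\in[n]$. Your tightness shows that all anchors in $\cC^\ell$ generate the same affine term, so the two readings coincide; you should say so explicitly.

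\textbf{A tacit hypothesis shared with the paper.} Your existence half, like the paper's, depends on an unstated assumption. Lemma~\ref{lem:single_subgrad_existence} yields common subgradients only for the particular split built in its proof, since an arbitrary DC split may kink inside a cell. That construction requires the recorded $x^\star(b^j)$ to be values of one continuous piecewise-linear optimal selection that is affine on every $\text{Conv}(\cC^\ell)$. Your phrase ``basis-induced selection'' does not ensure this when the LP has multiple optima.

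For example, take $\min\{0: x_1+x_2=b,\ x\ge 0\}$, with $b=1,3$ solved with basis $\{1\}$ and $b=2$ solved with basis $\{2\}$. Your own second half shows that any feasible $\theta$ would force both $\phi(2)=x^\star(2)=(0,2)$ and $\phi(2)=\tfrac12\bigl(x^\star(1)+x^\star(3)\bigr)=(2,0)$. So \eqref{eq:policyinequality} is infeasible in this case.

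Under dual nondegeneracy of the recorded bases, you can avoid Lemma~\ref{lem:single_subgrad_existence} altogether. Let $y^\ell$ be the dual solution of $\bfB^\ell$ and $g^\ell_k$ the $k$-th row of its basic-solution map, and choose $z^j_k=C\,c^\top x^\star(b^j)$, $v^\ell_k=C\,y^\ell$, $u^\ell_k=g^\ell_k+C\,y^\ell$. Then \eqref{eq:vz} holds by weak duality. Inequality \eqref{eq:phi_diff_u} holds by taking $C$ large on pairs where $y^{\ell(j)}$ is not dual optimal at $b^i$, and by complementary slackness on the remaining pairs.
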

\proof{Proof.} We divide the proof into two parts. In the first part, we show the existence of a feasible solution. In the second part, we derive the property \eqref{eq:policyproperties} of policy $\phi(b)$.

\textbf{Feasible solution existence:} We can write optimal solution map $\phi^\star(b)$ as $ \phi^\star(b) = \phi^\star_1(b) - \phi^\star_2(b)$. 
Let $(\phi^\star_2(b^i))_k = z^i_k$ and $(\phi^\star_2(b^j))_k = z^j_k$ and consider
\begin{align}
(\phi^\star(b^i))_k  &= (\phi^\star_1(b^i))_k - (\phi^\star_2(b^i))_k \nonumber \\
& \geq (\phi^\star_1(b^j))_k + \inner{u^{\ell(j)}_k, b^i-b^j} - (\phi^\star_2(b^i))_k \nonumber \\
& = (\phi^\star(b^j))_k + \inner{u^{\ell(j)}_k, b^i-b^j} + z^j_k - z^i_k. \label{eq:diff-xkij}
\end{align}
The first inequality follows from the definition of subgradient. We recall that there exists a piecewise linear optimal solution map $\phi^\star$ such that $\phi^\star(b) = x^\star(b)$ for all $b \in \cB$. Using this, we rewrite \eqref{eq:diff-xkij} as
\begin{align}
& (x^\star(b^i))_k - (x^\star(b^j))_k \nonumber \\
& \geq \inner{u^{\ell(j)}_k, b^i-b^j} - z_k^i + z^j_k \ \text{for all} \ i,j \in [n]. \label{eq:xstar_diff_u}
\end{align}
For the $\phi^\star_2$ function, we have
\begin{align}
z_k^i & \geq z^j_k + \inner{v^{\ell(j)}_k, b^i-b^j}. \label{eq:phi2subgrad}
\end{align}
From Lemma \ref{lem:single_subgrad_existence}, there exists $u^{\ell(j)}_k$ and $v^{\ell(j)}_k$ satisfying \eqref{eq:diff-xkij} and \eqref{eq:phi2subgrad}; completing the proof of the first part.

\textbf{Policy properties:} Consider a cell $\cC^\ell$. Then following the arguments of the proof of Lemma \ref{lem:single_subgrad_existence}, $\phi_1$ and $\phi_2$ restricted to $\text{Conv}(\cC^\ell)$ are linear functions. It implies that inequalities in \eqref{eq:policyinequality} are tight for all $b^i, b^j \in \cC^\ell$ i.e.,
\begin{align*}
\inner{u^{\ell}_k, b^i-b^j} + x^\star(b^j)_k + z^j_k & = x^\star(b^i)_k + z_k^i \\
 \inner{v^{\ell}_k, b^i-b^j} + z^j_k  & = z_k^i. \label{eq:vl_tight}
\end{align*}
For any other $\tilde{\ell} \neq \ell$, we only have the inequalities in \eqref{eq:policyinequality}. This implies that $\max_{\ell' \in [L]}(u^{\ell'}_k)^\top (\rhsobs^i -b^{\ell'}) + x^\star(\rhsobs^{\ell'})_{k} + z^{\ell'}_k =  x^\star(b^i)_k + z_k^i$ and $\max_{\ell' \in [L]} (v^{\ell'}_k)^\top (\rhsobs^i -b^{\ell'})+ z^{\ell'}_k = z^i_k $ and hence $\phi(b^i) = x^\star(b^i)$. Consider a point $\hat{b}^\ell = \sum_{i =1}^{\vert \cC^\ell \vert} \lambda^{i} b^{n_i} \in \text{Conv}(\cC^\ell)$. Then we can clearly see that $(\phi_1(\hat{b}^\ell))_k = \sum_{i=1}^{\vert \cC^\ell \vert} \lambda^i (x^\star(b^{n_i})_k + z^{n_i}_k)$ and $(\phi_2(\hat{b}^\ell))_k = \sum_{i=1}^{\vert \cC^\ell \vert} \lambda^i z^{n_i}_k$ and hence $\phi(\hat{b}^\ell) = x^\star(\hat{b}^\ell)$, completing the proof. \hfill \ifpaper \Halmos \else \qed \fi

The striking advantage of PLDC policy \eqref{eq:pldc_policy} is that it can be obtained using off-the-shelf solvers as it requires a solution feasible to constraints in \eqref{eq:policyinequality}. This efficiency can be further enhanced by observing that many entries in $u^\ell$, $v^\ell$, and $z^\ell$ may end up being zero when the number of nonbasic variables is large, and a few components of the right-hand-side vector are varying. Thus, we effectively restrict the solutions of \eqref{eq:phi_diff_u}-\eqref{eq:vz} to the ones having minimum $\ell_1$ norm. This step yields a convex optimization problem, which we refer to as the training problem.
\begin{subequations} \label{eq:training_problem}
\begin{align}
\min_{u,v,z}~&  \sum_{l=1}^{L} \sum_{k =1}^{d_x} \Vert u^\ell_k\Vert_1 + \Vert v^\ell_k \Vert_1 + \sum_{i=1}^n \Vert z^i\Vert_1  \\
\text{s.t.}~& \ \inner{u^{\ell(j)}_k, b^i-b^j} + x^\star(b^j)_k + z^j_k  \nonumber \\
& \leq  x^\star(b^i)_k + z^i_k , \forall  \ i,j \in [n], \ k \in [d_x], \\
& \inner{v^{\ell(j)}_k, b^i-b^j}  + z^j_k   \leq z^i_k , \forall \ i,j \in [n], \ k \in [d_x].
\end{align}
\end{subequations}
With the solution of this training problem, we can construct the PLDC policy \eqref{eq:pldc_policy}. The resulting PLDC policy has several beneficial properties, which we identify below. 
\begin{enumerate}
    \item The policy recovers an optimal piece $(\mathbf{B}^\ell)^{-1}b $ for all $b$ within the convex hull $\text{Conv}(\mathcal{C}^\ell)$.  As the optimal solutions for past right-hand-side values are obtained after a significant investment of resources (viz., executing an SP algorithm), the ability to recover these solutions exactly is a minimal expectation that our policy design certainly meets. Note that an optimal piece $(\mathbf{B}^\ell)^{-1}b $ over an entire cell may not be recovered if the number of data points that belong to the cell is limited.
    \item Choosing one subgradient per cell rather than a subgradient for every observation of the right-hand-side vector is one of the crucial features of our characterization, which alleviates the issue of holding a large number of pieces. \cite{siahkamari2020piecewise} propose a policy that shares certain similarities with our PLDC policy \eqref{eq:pldc_policy}, albeit for piecewise linear regression. In their design, they need one piece per data point. However, such an approach does not work in our problem setting and fails to recover the pieces $(\bfB^\ell)^{-1}b$. We elaborate on these differences in the behavior in Appendix \ref{appendix:learningPL}.
    \item It is also worth recalling that the methodologies in \cite{royset2023risk} and \cite{Zhang.Liu.Zhao-2023} solve computationally expensive nonconvex problems sequentially to learn a piecewise linear function. In contrast to their approach, our training problem \eqref{eq:training_problem} is a convex optimization problem that can be reformulated into an LP and solved using off-the-shelf LP solvers. Moreover, our policy characterizes the optimal bases from previous solves in a functional form and prescribes the solution for a new right-hand side. Therefore, executing the policy to identify a solution to a new instance requires minimal computational effort and can be delivered within tight time bounds. 
\end{enumerate}
While the policy design procedure we present in this paper pertains to perturbations to the constraint right-hand sides, it also applies to the relatively easier setting of perturbations to the cost coefficients. In the next section, we adapt the training problem \eqref{eq:training_problem}, and subsequently, the PLDC policy \eqref{eq:pldc_policy} to 2-SLPs with varying right-hand sides in the first stage.

\section{PLDC Policy for Two-Stage Stochastic Linear Programming} \label{sec:pldc_2slp}
In this section, we construct data-driven policies for 2-SLPs using the tools established in the previous section. Before presenting the policy design procedures, we state the assumptions for the 2-SLP models with a right-hand side belonging to $\cB$. 
\begin{enumerate}[label=A\arabic*.]
\item The solution set $\set{X}$ of first-stage decisions is compact and set of scenarios $\rvset$ is finite.
\item The relatively complete recourse property is satisfied. \label{assum:completeRecourse}
\item The recourse function $Q(x,\obs) > -\infty$ almost surely. \label{assum:boundedSubproblem}
\item The recourse matrix ($W$) is deterministic, i.e., the fixed recourse property is satisfied. \label{fixedRecourse} 
\end{enumerate}
The assumption \ref{assum:completeRecourse} implies that the second-stage program is feasible for all $x \in \set{X}$ and $\obs \in \rvset$, implying $Q(\cdot, \obs) < \infty$, almost surely. With \ref{assum:boundedSubproblem}, the dual feasible set of the second-stage is nonempty, almost surely. 

The PLDC policy \eqref{eq:pldc_policy} directly applies to the extensive scenario form of a 2-SLP problem with finite support. Recall that the policy training dataset includes optimal solutions and the indices of their basic variables. Obtaining these requires solving extensive scenario forms, which becomes computationally intensive as the number of second-stage uncertainty scenarios increases, rendering the direct application of the policy design procedure in \S\ref{sec:pldc_lp} computationally prohibitive. It is also well-known that the expected recourse function and consequently the first-stage objective $f(x)$ are polyhedral for a 2-SLP with finite support \citep{shapiro2021lectures}. Using this result, a 2-SLP is written in the decomposed form as
\begin{align}
    \min_{x \in \set{X},\eta}~& c^\top x+ \eta \label{eq:2slp_polyhedral} \\
    \text{s.t.}~& \eta \geq \alpha_j + \inner{\beta_j, x} \ \text{for all} \ j \in \set{J} \notag,
\end{align}
where $\set{J}$ represents a bundle or the indices of pieces in the polyhedral function $\expect{Q(x,\rv)}{\PP}$. Notice that the above form is also an LP problem; hence, we can apply the PLDC policy \eqref{eq:pldc_policy}. However, there are two challenges in applying the process described in \S\ref{sec:pldc_lp}. Firstly, the classical decomposition methods for solving 2-SLPs, such as the L-Shaped \citep{Van.Wets-1969}, proximal bundle \citep{Ruszczynski-1986} and SD \citep{Higle.Sen-1994} find a local approximation of the polyhedral function. In other words, only a subset of $\set{J}$ is discovered. We compute exact pieces of the polyhedral function in the L-Shaped method, and stochastic approximations of the pieces in the SD algorithm.  While the convergence guarantees of the respective algorithms validate such approximations, the policy design procedures associated with a specific solution algorithm must account for the nature of the pieces computed by those algorithms. The second challenge lies in the policy's design steps. Recall that we need the indices of basic variables to construct the cells. Since decomposition algorithms generate local approximations, we can recover different parts of the polyhedral functions at distinct training data points. We have to account for these differences when choosing the basic variable indices that determine how to partition the right-hand sides of the training dataset into distinct cells. To address these challenges, we present tailored approaches for constructing the cells and the policy that account for the specific features of the solution algorithm (L-Shaped or SD) used to obtain the training data. 

\subsection{Description of Policy Design using L-Shaped} \label{sec:lshapedpolicy}
We refer the reader to \cite{Van.Wets-1969,ruszc03bk,shapiro2021lectures} for a detailed description of the L-Shaped method and begin by directly discussing the relevant information collected and used from the L-Shaped runs to design the policy. 

The L-Shaped method uses an outer linearization-based approximation of the first-stage objective function computed at a sequence of candidate solutions. The function approximation is a pointwise maximum of affine pieces. The slope and intercept of an individual piece are $ \alpha = \sum_{\obs \in \rvset} p^\obs \inner{\pi(\obs),h(\obs)}$ and $ \beta  = - \sum_{\obs \in \rvset} p^\obs(\pi(\obs))^\top T(\obs)$, respectively, where $p^\obs$ is the probability of scenario $\obs$ and $\pi(\obs)$ is an optimal dual solution of scenario subproblem computed at a candidate solution. Note that the L-Shaped operates under a complete knowledge of the probability distribution $(p^\obs)_{\obs \in \rvset}$, and therefore, the objective function approximation formed in the algorithm is deterministic. Moreover, using the optimal dual solutions of the subproblem, the affine pieces serve as exact supporting hyperplanes of the expectation-valued objective function along the candidate solution trajectory. If $\tau$ denotes the terminal iteration of an L-Shaped run, then the first-stage problem upon termination takes the form in \eqref{eq:2slp_polyhedral}, with a bundle $\set{J}^\tau \subseteq \set{J}$ of all affine pieces discovered in the $\tau$ iterations of the run. The optimal solution of the resultant problem satisfies:
\begin{align}
& \underbrace{\begin{bmatrix} 
    A_B & \mathbf{0} & \mathbf{0}_{\vert J^\tau \vert} \\ 
    -\bfbeta & \boldsymbol{1}  & -\mathbf{I}_{\vert J^\tau \vert} 
\end{bmatrix}}_{\coloneqq \bfB}
\begin{bmatrix} x^\star_B \\ \eta^\star \\ \zeta^\star_B \end{bmatrix} 
= \begin{bmatrix} b \\ \boldsymbol{\alpha}
\end{bmatrix}, \label{eq:2slpbasis}
\end{align}
where $A_B$ refers to the matrix $A$ restricted to columns corresponding to basic variables in $x^\star$ and $\zeta^\star_B$ are the optimal basic surplus variables corresponding to inequality constraints. We gather the intercept terms into $\bfalpha = (\alpha_1, \hdots, \alpha_{\vert J^\tau \vert})^\top$ and slope vectors into $\bfbeta = (-\bfbeta_1^\top; \hdots; -\bfbeta^\top_{\vert J^\tau \vert})$. Notice that the bundle may include pieces generated in all the iterations or a subset of pieces resulting from a bundle management scheme. For instance, if we employ the regularized variant of the L-Shaped method, the bundle may contain only the pieces active at the terminal solution \citep{Ruszczynski-1986}. Nonetheless, our PLDC policy design procedure applies regardless of the bundle management scheme utilized. 

\subsubsection{Creating Cells}
In deterministic LPs, the basis size remains the same even as we vary the right-hand sides. Since we assume that $A$ is of full rank, the resulting bases span the $\RR^{m_1}$-dimensional space. In contrast, the terminal first-stage problem may have a different number of cuts in the bundle when 2-SLPs are solved with different right-hand sides. Consequently, the resulting bases may span spaces of different dimensions. Even when the bundle sizes are the same, the cuts within each bundle may differ, introducing distinct surplus variables (corresponding to the cuts included as inequality constraints in the terminal first-stage problem) into the optimal basis. Therefore, it is inappropriate to build cells using the basis in \eqref{eq:2slpbasis}.

To address this challenge, we utilize a consolidated master problem. We denote by $\cA_i$ the set of active cuts at the terminal solution of the 2-SLP with right-hand side $b = b^i$. We assemble the unique active cuts generated from solving 2-SLPs with $n$ different right-hand sides into $\cA = \cup_{i=1}^n \cA_i$. Using the set $\cA$, we build the consolidated master problem
\begin{align}  \label{eq:consolmasterproblem}
    \min_{x \in \set{X},\eta}~& c^\top x+ \eta \tag{$CM$}\\
    \text{s.t.}~& \eta \geq \alpha_j + \inner{\beta_j, x} \ \text{for all} \ (\alpha_j,\beta_j) \in \cA. \nonumber
\end{align}
Notice that $f(x) \geq \max_{(\alpha_j,\beta_j) \in \cA}\{ \alpha_j + \inner{\beta_j,x}\}$ for all $x \in \set{X}$. Therefore, solving the consolidated master at $b^i \in \widehat{\set{B}}$ recovers an optimal solution, although it may differ from the one at termination of the L-Shaped method. Alternatively, we can use the terminal solution $(x^\star(b^i),\eta^\star(b^i))$ to recover the basis of the consolidated master \eqref{eq:consolmasterproblem} for all $i \in [n]$. Consequently, we identify the indices of the optimal basic variables and use them to create the collection of cells $\{\cC^\ell\}_{\ell = 1}^L$. 

\subsubsection{PLDC Policy}
Through the master problem \eqref{eq:consolmasterproblem} and associated cells, we modify our PLDC policy \eqref{eq:pldc_policy} for 2-SLP to
\begin{align}
\phi(\rhsobs) & = \max_{\ell \in [L]} u^\ell \left(\begin{pmatrix} \rhsobs \\ \bfalpha \end{pmatrix} - \begin{pmatrix} b^\ell \\ \bfalpha \end{pmatrix} \right) + \begin{pmatrix} x^\star(\rhsobs^\ell) \\ \eta^\star(b^\ell) \end{pmatrix} + z^\ell  \nonumber \\ 
& \qquad - \max_{\ell \in [L]} v^{\ell} \left(\begin{pmatrix} \rhsobs \\ \bfalpha \end{pmatrix} - \begin{pmatrix} b^\ell \\ \bfalpha \end{pmatrix} \right) + z^\ell. \label{eq:pldc_2slp}
\end{align}
The parameters $u^\ell, v^\ell$ and $z^\ell$ of policy \eqref{eq:pldc_2slp} are obtained from solving the training problem 
\begin{subequations} \label{eq:training_problem2slp}
\begin{align}
& \min_{u,v,z}  \sum_{l=1}^{L} \Big(\sum_{k =1}^{d_x+1} \Vert u^\ell_k\Vert_1 + \Vert v^\ell_k \Vert_1\Big) + \sum_{i=1}^n \Vert z^i\Vert_1 \Big)  \\
& \text{s.t.} \ \ u^{\ell(j)}  \left(\begin{pmatrix} \rhsobs^i \\ \bfalpha \end{pmatrix} - \begin{pmatrix} b^j \\ \bfalpha \end{pmatrix} \right) + \begin{pmatrix} x^\star(b^j) \\ \eta^\star(b^j) \end{pmatrix} + z^j \nonumber \\
& \qquad \leq \begin{pmatrix} x^\star(b^i) \\ \eta^\star(b^i) \end{pmatrix} + z^i   \ \text{for all} \ i,j \in [n], \\
& v^{\ell(j)} \left(\begin{pmatrix} \rhsobs^i \\ \bfalpha \end{pmatrix} - \begin{pmatrix} b^j \\ \bfalpha \end{pmatrix} \right) + z^j \leq  z^i  \ \text{for all} \ i,j \in [n],
\end{align}
\end{subequations}
where the inequalities are defined componentwise, $u^\ell, v^\ell \in \RR^{(d_x +1) \times (m_1 + \vert \cA \vert)}$, and $z^\ell \in \RR^{d_x +1}$. The policy $\phi(b)$ returns a first-stage decision $x$, hereafter referred to as a policy-prescribed solution, and an approximated value $\eta$ of the polyhedral function. 

The updated policy in \eqref{eq:pldc_2slp} inherits the properties of \eqref{eq:pldc_policy} identified in the previous section, including the vital property unfolded in Theorem \ref{thm:pldc_properties}, i.e., $\phi(b) = (x^\star(b),\eta^\star(b)) \ \text{for all} \ b \in \text{Conv}(\cC^\ell), \ \ell \ \in \ [L]$. Moreover, for any $b \in \set{B}$, if $\phi(b) = (\hat{x}(b),\hat{\eta}(b))$, the policy-prescribed solution and the approximate objective function value, if the policy recovers an optimal piece over a cell, the epigraph variable $\hat{\eta}(b)$ returned by the policy is the exact first-stage value at the policy-prescribed solution and is optimal over that cell. When the policy-prescribed solution is only feasible, the corresponding value $\hat{\eta}(b)$ may lie above the polyhedral function. In this case, we must estimate the objective function value as $\max_{(\alpha_j,\beta_j) \in \cA} \{\alpha_j + \inner{\beta_j,\hat{x}(b)} \}$, which is a lower bound on the true value $f(\hat{x}(b))$.
The entire policy construction procedure is summarized in Algorithm \ref{alg:pldc_design_2slp}, which we call a static procedure because it uses a fixed number of right-hand sides.
\begin{algorithm}[!t]
\caption{Static Procedure: Decomposition Algorithm-guided Policy for 2-SLP}
\label{alg:pldc_design_2slp}
\begin{algorithmic}[1]
    \State \textbf{Input:} $\widehat{\set{B}} = \{b^i\}_{i=1}^n, \widehat{\set{X}} = \{(x^\star(b^i),\eta^\star(b^i))\}_{i=1}^n$, set of cuts $\cA$
    \State Construct the consolidated master problem \eqref{eq:consolmasterproblem} using $\cA$\label{stepcmp}
    \State For every $b^i$, find  the indices of basic variables, $B^i$, associated with $(x^\star(b^i),\eta^\star(b^i))$. Let the number of such sets be $L (\leq n)$
    \State Gather right-hand sides having same basic variables and place them into the same cell, i.e., $\cC^\ell = \{b^{i_1},b^{i_2},\ldots, b^{i_{m_\ell}} \}$ such that $B^{i_1} = B^{i_2} = \cdots \cdots = B^{i_{m_\ell}}$
    \State Pick one right-hand side $b^\ell$ and corresponding optimal solution $(x^\star(b^\ell),\eta^\star(b^\ell))$ from each cell $\cC^\ell$
    \State Solve the training problem \eqref{eq:training_problem2slp} and get its optimal solution $(u^\ell, v^\ell, z^\ell)$
    \State \textbf{Output:} Policy parameters $\{(u^\ell, v^\ell, z^\ell)\}_{\ell = 1}^L$ and policy input $\{b^\ell, (x^\star(b^\ell),\eta^\star(b^\ell)) \}_{\ell = 1}^L$.
\end{algorithmic}
\end{algorithm}

When the scenario size is large, or the support is a continuous set, we can apply the L-Shaped method to the sample average approximation (SAA) of problem \eqref{eq:2slp}. In this case, our PLDC policy corresponds to the SAA problem. Several studies examine the quality of the SAA solution, and a variety of sequential procedures are used to measure it (for example, \cite{Bayraksan.Morton-2006, Bayraksan.Morton-2011}). These methods can also be used to assess the policy-prescribed SAA solutions. The connection between the PLDC policy of SAA and the true policy is beyond the scope of this paper and warrants further research. Alternatively to the SAA, one can use the SD method as the solution algorithm to train the PLDC policy, which we describe next. 

\subsection{Description of Policy Design using Stochastic Decomposition}
SD \citep{Higle.Sen-1994} is an elegant method for solving stochastic programs, even when the support is continuous. We provide a high-level description of SD sufficient to illustrate the policy construction process and refer the reader to \cite{Sen.Liu-2016} for a detailed exposition on the recent version. 

Like L-shaped, SD is also an outer-linearization approach that maintains a bundle of affine functions to approximate the first-stage expectation-valued objective. SD exhibits three important features that make it suitable for 2-SP problems with either continuous support or a large number of scenarios. First, it incorporates concurrent sampling and optimization steps that operate on an iteratively updated sample-average function, thereby avoiding the computationally intensive approach of solving a sequence of SAA problems. Secondly, SD efficiently re-utilizes previously discovered solution information to compute lower-bounding affine pieces of the sample-average functions. In other words, SD does not solve all past scenario subproblems to optimality; thus, it speeds up each iteration. Moreover, the affine pieces generated by SD are stochastic in nature, and they support the expectation-valued objective function only asymptotically. Finally, SD employs a regularization term in the first-stage approximate problem at every iteration. It allows SD to retain only a finite number of affine pieces in the bundle, further reducing the computational burden. The regularization term uses a sequence of incumbent solutions, and the algorithm is guaranteed to generate a sequence that converges to an optimal solution with probability one. The terminal first-stage approximate problem takes the following form
\begin{align}  \label{eq:sdmasterproblem}
    \min_{x \in \set{X},\eta}~& f(x) = c^\top x+ \eta + \frac{\varsigma}{2}\Vert x - \hat{x}^{\tau} \Vert^2 \tag{$M^\tau$} \\
    \text{s.t.}~& \eta \geq \hat{\alpha}_j + \inner{\hat{\beta}_j, x} \ \text{for all} \ j \in J^\tau, \nonumber
\end{align}
where $\hat{x}^\tau$ is the incumbent solution that meets the statistical optimality conditions. Due to the stochastic nature of SD, termination is based on a notion of statistical optimality that relies on estimates of primal-dual gap values, among other quantities (see \cite{Sen.Liu-2016} for details). Replicating SD runs with different seeds may yield different numbers of observations at termination, even for a 2-SLP with the same right-hand side. Consequently, the coefficients of the affine functions in the bundle $J^\tau$ are stochastic. Therefore, we must handle random optimal basis matrices, which poses a challenge for grouping the right-hand sides according to the random basic variables to construct the necessary cells needed for our policy design process. 

\subsubsection{Creating Cells}
For a right-hand side $b^i$, let $\hat{x}^i$ be the incumbent solution after the termination of SD. The affine functions active at $\hat{x}^i$ are collected into the set $\cA_i$. Since SD is invoked independently for each right-hand side $b \in \widehat{\set{B}}$, the number of second-stage observations used in the sample-average of the objective function can vary in each execution. Consequently, the collection $\cA = \cup_{i=1}^n \cA_i$ does not necessarily provide a valid lower bound on the first-stage objective. Thus, the consolidated master problem \ref{eq:consolmasterproblem} that we used to design the L-Shaped driven policy using $\cA$ is not meaningful with respect to the original problem \eqref{eq:2slp}. We address this issue by constructing out-of-sample affine functions at all the identified incumbent solutions $\{\hat{x}^i\}_{i=1}^n$. The workflow for generating such affine functions is as follows.
\begin{enumerate}
    \item Draw a sample $\{\obs^t\}_{t\in [N]}$ independently from the scenarios observed in $n$ solves of 2-SLPs.
    \item At an incumbent solution $\hat{x}^i$, solve the scenario subproblem for each $\obs \in \{\obs^t\}_{t\in [N]}$. Using the optimal dual solutions $\{\pi^i_t\}_{t\in [N]}$ of the of scenario subproblem, 
    we construct affine functions with coefficients
        \begin{align}
            \alpha_i & = \frac{1}{N}\sum_{t=1}^N (\pi^i_t)^{\top} h(\obs^{t}); \ \  \beta_i = -\frac{1}{N}\sum_{t=1}^N (\pi^i_t)^{\top}T(\obs^{t}).  \label{eq:outofsamplepieces}
        \end{align}
    We refer to them as out-of-sample affine functions. 
    \item Collect the out-of-sample affine functions into a bundle $\widebar{\set{A}}$. 
\end{enumerate}

A few comments about the out-of-sample bundle are in order. Firstly, any piece in the bundle $\widebar{\set{A}}$ lower bounds an out-of-sample function approximation of the expected second-stage value function. That is 
\begin{align}
    \frac{1}{N}\sum_{t=1}^N Q(x,\obs_{t}) & \geq \alpha_i + \inner{\beta_i, x} , \ \text{for all} \ (\alpha_i,\beta_i) \ \in \widebar{\set{A}}. \label{eq:outofsamplelbs}
\end{align}
Secondly, the cardinality of $\widebar{\set{A}}$ is at most $n$ as some of the affine functions might be repeated. Using the central limit theorem, it is evident that the left-hand side in \eqref{eq:outofsamplelbs} converges to the expected recourse value, almost surely, and therefore, the piecewise affine function defined as the pointwise maximum of the affine functions in $\widebar{\set{A}}$ provides a lower bound in expectation. Moreover, as the number of right-hand sides in $\widehat{\set{B}}$ grows to infinity, bundle $\widebar{\set{A}}$ recovers all the affine pieces of polyhedral function $\expect{Q(x,\rv)}{}$ over $\set{B}$. Finally, and most importantly for our purposes, we can formulate the consolidated master problem using affine functions over the bundle $\widebar{\set{A}}$. Thus, the SD-guided policy is obtained by invoking Algorithm \ref{alg:pldc_design_2slp} with $\set{A}$ replaced by $\widebar{\set{A}}$. Since $\hat{x}^i$ is optimal with probability one, there exists $N_i$ such that out-of-sample function value $c^\top \hat{x}^i+\frac{1}{N}\sum_{t=1}^N Q(\hat{x}^i,\obs_{t})$ lies in $\epsilon$-neighborhood of $v^\star(b^i)$ \citep{Higle.Sen-1991,Higle.Sen-1994}. As a result, the basis obtained using the out-of-sample bundle $\widebar{\set{A}}$ with $N\geq \max_i\{N_i\}$, and hence the policy provides solutions that are optimal with probability one.

Irrespective of the solution method employed, Algorithm \ref{alg:pldc_design_2slp} uses a fixed training sample size $n$ to recover seen pieces of the solution trajectory. In practice, however, new right-hand sides may be observed sequentially, raising the question of how to feed the information learned from new solves into the policy and update it. In the next section, we develop a sequential procedure to address this question.

\subsection{Sequential Procedure for Policy Design} \label{sec:seqproc}
In this section, we present a sequential procedure to train the PLDC policy. Suppose that a 2-SLP problem has been solved for a certain number of right-hand sides using a decomposition algorithm. These solves provide optimal bases and the high-quality estimates of the first-stage objective function at their respective terminal solutions. The sequential procedure is designed to assess the quality of the policy trained using the information collected from these solves, terminate if the policy-prescribed solutions are of sufficiently high quality, or continue the training process otherwise.

Our procedure performs multiple rounds sequentially, with each round comprising two steps. First, the current policy is evaluated on a new batch of right-hand sides. To perform this evaluation, we implement a decomposition algorithm to compare the first-stage objective value at the policy-prescribed solution with the optimal value. The second step appends new solve information, namely, the right-hand vectors, active cuts, corresponding optimal solutions, and estimated objective function values, to the training data. We incorporate only the information from solves whose solutions are rendered infeasible or suboptimal by the current policy. This choice allows us to maintain a training problem of manageable size by retaining only that information guaranteed to improve the quality of the policy. With the updated training data, the procedure solves the policy-training problem and produces an updated policy. The procedure proceeds to the next round and repeats the two steps above. We present a detailed description of the sequential policy update procedure below.

We start with an initial training data, $\widehat{\set{B}}_0 = \{b^i\}_{i=1}^{n_0}, \widehat{\set{X}}_0 = \{x^\star(b^i)\}_{i=1}^{n_0}$ and $\widehat{\set{V}}_0 = \{v^\star(b^i)\}_{i=1}^{n_0}$. The set of corresponding active cuts is denoted by $\widehat{\set{A}}_0$. If $\widehat{\set{X}}_0$ and $\widehat{\set{V}}_0$ are obtained using the L-Shaped method, $\widehat{\set{A}}_0 = \set{A}_0$; if they are obtained using SD, $\widehat{\set{A}}_0 = \widebar{\set{A}}_0$. The static procedure in Algorithm \ref{alg:pldc_design_2slp} is invoked on this data to arrive at an initial policy $\phi^1$. At round $t \geq 1$, we generate a batch of right-hand side observations $\widehat{\set{B}}_t = \{b^1,b^2,\ldots,b^{n_t}\}$ independently from those observed in previous rounds. The policy is applied to each of these sampled right-hand sides, and the solution $\phi^t(b^i) = (\hat{x}^t(b^i),\hat{\eta}^t(b^i))$ is recorded. We execute the stage decomposition method (L-Shaped or SD) on each of the chosen right-hand sides and collect the solutions information $\widehat{\set{X}} = \{x^\star(b^i)\}_{i=1}^{n_t}$ and $\widehat{\set{V}} = \{v^\star(b^i)\}_{i=1}^{n_t}$, and the bundle of active cuts $\widehat{\set{A}}_t$. Next, we compute the fraction of policy-prescribed solutions that are feasible to the corresponding 2-SLP instance.  For this, we check the feasibility of the policy prescribed solution $\hat{x}^t(b^i)$ up to a tolerance level $\epsilon_\set{X}$ (as is common practice in solvers). To assess this, we determine the fraction of infeasible instances as
\begin{align} \label{eq:infeasinst}
R_{\set{X}}(\phi^t) = 1 - \expect{\mathbbm{1}(A\hat{x}^t(\rhsrv) =  \rhsrv + \epsilon_{\set{X}}, \hat{x}^t(\rhsrv) \geq 0)}{\rhsrv}.
\end{align}
We estimate the quantity in \eqref{eq:infeasinst} and its $(1-\nu)$-confidence interval using the batch $\widehat{\set{B}}_t$ as: 
\begin{subequations}    
\begin{align}
    R_{\set{X}}^{n_t}(\phi^t) =~& 1-  \frac{1}{n_t} \sum_{i=1}^{n_t} \mathbbm{1}(A\hat{x}(b^i) = b^i + \epsilon_{\set{X}}, \hat{x}^t(b) \geq 0 ), \\
    R_{\set{X}}(\phi^t) \in~& \Bigg(0,  R^{n_t}_{\set{X}} + z_{\nu/2} \frac{1}{\sqrt{4n_t}} \Bigg). \label{eq:infeas_confInt}
\end{align}
\end{subequations}
Whenever the interval length in \eqref{eq:infeas_confInt} is sufficiently small, we can conclude that $\phi^t$ generates feasible solutions to 2-SLP instances for almost every right-hand side with high confidence. Hence, at a round $t$, if $n_t$ is such that the interval length is below a threshold $\rho$, then we proceed to assessing the quality of the objective function value at the policy-prescribed solution.

We carry out the objective function value assessment by revisiting the nonconvex problem \eqref{eq:risk_neutral}. We permit the user-defined errors in the policy-prescribed solution and rewrite problem \eqref{eq:risk_neutral} into the following form
\begin{align} \label{eq:optimalpol_ind}
\min_{\phi \in \hat{\Phi}_{\text{PL}}}~& \bigg \{p(\phi) \coloneqq \expect{\mathbbm{1}\{f(\hat{x}(\rhsrv)) - v^\star(\rhsrv) > \epsilon_f \}}{\rhsrv} \bigg \}, 
\end{align} 
where $\hat{\Phi}_{\text{PL}} = \{ \phi \in \Phi_{\text{PL}} \ \vert \ \phi(b) = (\hat{x}(b),\hat{\eta}(b)), \  A\hat{x}(b) = b + \epsilon_\set{X}, \ \hat{x}(\rhsobs) \geq 0 \ \text{for almost every} \ \rhsobs \in \cB  \}$.
The value $p(\phi)$ represents the fraction of instances that are $\epsilon_f$-away from their optimal value at the policy-prescribed solution. Since $\phi^\star$ is an element of $\hat{\Phi}_{\text{PL}}$, in which case the expectation in \eqref{eq:optimalpol_ind} evaluates to zero, we have $p(\phi) \geq 0$ for all $\phi \in \hat{\Phi}_{\text{PL}}$. Therefore, at round $t$, if $p(\phi^t) \leq \epsilon$ given that $\phi^t(b)$ is feasible for almost every $b \in \cB$, we say that $\phi^t$ is a good quality policy. However, the value of $p(\phi^t)$ is unknown and we have to resort to sampling-based estimation. When the number of scenarios is manageable, we can compute $f(\hat{x}(b))$ and use the value $v^\star(b)$ returned by the L-Shaped method to obtain an estimate of $p(\phi^t)$ over the batch $\widehat{\set{B}}_t$. For the large scenario size, the function value $f(\hat{x}(b))$ in \eqref{eq:optimalpol_ind} needs to be estimated. Moreover, in this case SD is our recommended choice of the algorithm and the value $v^\star(b)$ reported by SD is a random quantity. We therefore present the policy evaluation procedure separately for the L-Shaped and the SD.

\subsubsection{Evaluating L-Shaped-Guided Policy} \label{sec:lshapedpolicyeval} 
At round $t$, we begin by computing an estimate of \eqref{eq:optimalpol_ind} using the batch $\widehat{\set{B}}_t$ as
\begin{align}
    p_{m_t} & = \frac{1}{m_t} \sum_{j \in \cI_{\set{X}}} \mathbbm{1}\{ f(\hat{x}^t(b^j))  - v^\star(b^j) > \epsilon_f \}, \label{eq:lshapednonoptimal}
\end{align}
where $m_t = \vert \cI_\set{X} \vert$ and $\cI_\set{X} \subseteq [n_t]$ indexes instances where the policy-prescribed solution is feasible. Noting that $\expect{p_{m_t}}{} = p(\phi^t)$ and $\text{Var}(p_{m_t}) = \frac{p(\phi^t)(1-p(\phi^t))}{m_t} \leq \frac{1}{4m_t}$, we obtain the one-sided confidence interval on $p(\phi^t)$ at level $(1-\mu)$ as
\begin{align}
  & p(\phi^t) \in \Bigg(  0, p_{m_t} + z_{\mu/2}\frac{1}{\sqrt{4m_t}} \Bigg), \label{eq:nonopt_confInt}
\end{align}
We conclude that when the feasible instances at the current policy $\phi^t(\cdot)$ meet the condition $p_{m_t} + z_{\mu/2}\frac{1}{\sqrt{4m_t}} \leq \epsilon$, we terminate the sequential procedure. 

\subsubsection{Evaluating SD-Guided Policy} When we are dealing with 2-SLPs with significantly large or continuous support, computing the gap $f(\hat{x}^t(b)) - v^\star(b)$ exactly, even for a given $b$, is not possible. For such problems, our proposed solution approach, SD, is terminated based on statistical optimality conditions. Therefore, we only have an estimate of $v^\star(b)$ computed using a sample encountered during the execution of SD. The function value $f(\hat{x}^t(b))$ can only be estimated using sampling. 

To examine the similarities between $f(x^\star(b))$ and $f(\hat{x}^t(b))$ for a given $b \in \widehat{B}_t$, we test the null hypothesis $\text{H}_0(b): f(x^\star(b)) = f(\hat{x}^t(b))$ against $\text{H}_1(b): f(x^\star(b)) \neq f(\hat{x}^t(b))$. We estimate $f(x^\star(b))$ and $f(\hat{x}^t(b))$ using the same sample of $M$ observations, $\obs^1,\ldots,\obs^M$, generated independently from the observations used in the SD execution. Employing the common random numbers to estimate $f(x^\star(b))$ and $f(\hat{x}^t(b))$ ensures a low-variance estimation of the 
gap $f(x^\star(b)) - f(\hat{x}^t(b))$. Notice that the sample used can be the same as that used to generate the out-of-sample affine function in \eqref{eq:outofsamplepieces}, in which case we only have to estimate $f(\hat{x}^t(b))$. If the scenario-specific objective function values are equal for all observations $\obs^1,\ldots,\obs^M$, then we trivially accept the null hypothesis. Otherwise, we compute the sample mean estimate of the gap $f(x^\star(b)) - f(\hat{x}^t(b))$, denoted by $\widebar{G}$, and its sample standard deviation $S_g$. We reject the null hypothesis if the t-statistic, $\sqrt{M}\widebar{G}/S_g$, is greater than $t_{M-1, \nu/2}$, where $t_{M-1, \nu/2}$ is the critical value at significance level $\nu$ and $M-1$ degrees of freedom. Otherwise, we accept the null hypothesis, which implies that the objective values at the policy-prescribed and SD solutions are not significantly different. Specifically, the difference $f(x^\star(b)) - f(\hat{x}^t(b))$ lies in the interval $\bigg(\widebar{G} - t_{M-1, \nu/2} \frac{S_g}{\sqrt{M}},\widebar{G} + t_{M-1, \nu/2}\frac{S_g}{\sqrt{M}}\bigg)$ with $100(1-\nu)$ confidence. Thus, we conclude that the policy-prescribed solution is of good quality. 

Analogous to $p_{m_t}$ defined in section \ref{sec:lshapedpolicyeval}, we denote by $\hat{r} = \expect{\mathbbm{1}(\text{H}_0(\rhsrv) \ \text{is rejected})}{\rhsrv}$, the fraction of instances for which the null hypothesis is rejected (the policy-prescribed solution is not acceptable). Its sample mean estimator is computed as
\begin{align}
r_{m_t} & = \frac{1}{m_t} \sum_{j \in \cI_{\set{X}}} \mathbbm{1}( \text{H}_0(\rhsobs^j) \ \text{is rejected} ), \label{eq:sdnonoptimal}
\end{align}
where $\cI_{\set{X}}$ is the set of instances feasible at the policy-prescribed solution. Then the one sided $100(1-\mu)$ confidence interval is given by
\begin{align}
\Bigg(  0, r_{m_t} + z_{\mu/2}\frac{1}{\sqrt{4m_t}} \Bigg), \label{eq:sdnonopt_confInt}
\end{align}
With this, we conclude the evaluation of the policy-prescribed solution.

Using the interval in \eqref{eq:infeas_confInt} to determine the feasibility of the policy-prescribed solutions to the 2-SLP problem, and \eqref{eq:nonopt_confInt} (for L-Shaped), \eqref{eq:sdnonopt_confInt} (for SD) to determine the quality of the objective function values, we decide whether to terminate the sequential procedure or continue refining the policy. Specifically, if the lengths of these intervals fall within user-defined thresholds ($\rho \in (0,1)$ and $\epsilon > 0$), the procedure is terminated and outputs the current policy $\phi^t$; otherwise, we append the suboptimal and infeasible instances into the previous training data, $\widehat{\set{B}}_{t-1}, \widehat{\set{X}}_{t-1}$ and add the set of cuts $\widehat{\set{A}}_t$ into the bundle $\widehat{\set{A}}_{t-1}$. The algorithm \ref{alg:pldc_design_2slp} is implemented on this updated data to get the policy $\phi^{t+1}$ and completes round $t$. We move to the next round $t+1$ with a new batch of samples of size $n_{t+1} = \varrho n_{t} $, where $ \varrho > 1$, and repeat the above steps. The pseudocode of our sequential procedure explained above is presented in Algorithm \ref{alg:seqproc_lshaped}. 
\begin{algorithm}[!ht]
\caption{Sequential Procedure: Decomposition Algorithm-guided Policy for 2-SLPs}
\label{alg:seqproc_lshaped}
\begin{algorithmic}[1]
\State \textbf{Input:} $n_0 \in \mathbb{Z}_+$ , $\widehat{\set{B}}_0 = \{b^i\}_{i=1}^{n_0}, \widehat{\set{X}}_0 = \{(x^\star(b^i),\eta^\star(b^i))\}_{i=1}^{n_0}$, $\widehat{\set{V}}_0 = \{v^\star(b^i)\}_{i=1}^{n_0}$, $\widehat{\set{A}}_0$, $\epsilon > 0, \epsilon_\set{X} > 0, \epsilon_f > 0$, $t = 1$, $\rho \in (0,1), \varrho > 1, n_1 = \lceil \varrho n_0 \rceil$, $z_{\mu/2}$-score of $100(1-\mu)$ confidence interval
\State \textbf{Initial policy:} Construct policy $\phi^1$ using algorithm \ref{alg:pldc_design_2slp} with inputs $\widehat{\set{B}}_0, \widehat{\set{X}}_0$ and $\widehat{\set{A}}_0$
\State \label{drawrhs_ls} Draw a sample $\widehat{\set{B}}_t = \{ b^1,b^2,\ldots,b^{n_t} \}$ independently and identically distributed
\State Compute solution $\phi^t(b^i) = (\hat{x}^t(b^i),\hat{\eta}^t(b^i))$ for each $b^i \in \widehat{\set{B}}_t$
\State Calculate fraction of infeasible instances:
\begin{align} 
R_{\set{X}}^{n_t}(\phi^t) = 1-\frac{1}{n_t} \sum_{i=1}^{n_t}  \mathbbm{1}(A\hat{x}^t(b^i) = b^i + \epsilon_{\set{X}}, \hat{x}^t(b^i) \geq 0 )
\end{align}
\If{$R^{n_t}_{\set{X}} + \frac{z_{\nu/2}}{2\sqrt{n_t}} \leq \rho$}
\State Collect indices of feasible $\hat{x}^t(b^i)$ into set $\cI_\set{X} = \{i_1,i_2,\ldots,i_{m_t}\}$
\State Run L-Shaped (or SD) and obtain $\widehat{\set{X}}_t = \{(x^\star(b^i),\eta^\star(b^i))\}_{i=1}^{n_t}$ and $\widehat{\set{V}}_t = \{v^\star(b^i)\}_{i=1}^{n_t}$ along with bundle of cuts $\widehat{\set{A}}_t$
\State Compute fraction of suboptimal instances: $p_{m_t}$ using \eqref{eq:lshapednonoptimal} for L-Shaped (or $r_{m_t}$ using \eqref{eq:sdnonoptimal} for SD)
\If{$p_{m_t} + \frac{z_{\mu/2}}{2\sqrt{m_t}} \leq \epsilon$ $\big(\text{or} \ r_{m_t} + \frac{z_{\mu/2}}{2\sqrt{m_t}} \leq \epsilon \big)$}
\State output $\phi^t$ and terminate the procedure
\Else
\State go to step \ref{appenddata_ls}
\EndIf
\Else
\State go to step \ref{appenddata_ls}
\EndIf
\State \label{appenddata_ls} Collect indices of infeasible and suboptimal instances into $\cI^c_\set{X} \subseteq [n_t]$ and $\cI_{f} \subseteq \cI_\set{X}$ respectively. Reset $\widehat{\set{B}}_t \leftarrow \widehat{\set{B}}_{t-1} \cup \{b^i\}_{i \in \cI^c_\set{X} \cup \cI_f}, \widehat{\set{X}}_t \leftarrow \widehat{\set{X}}_{t-1} \cup \{(x^\star(b^i), \eta^\star(b^i)\}_{i \in \cI^c_\set{X} \cup \cI_f}, \widehat{\set{V}}_{t} \leftarrow \widehat{\set{V}}_{t-1} \cup \{v^\star(b^i)) \}_{i \in \cI^c_\set{X} \cup \cI_f}$ and $\widehat{\set{A}}_t \leftarrow \widehat{\set{A}}_{t-1} \cup \widehat{\set{A}}_t$
\State \label{pldc_param_ls} Invoke algorithm \ref{alg:pldc_design_2slp} with inputs $\widehat{\set{B}}_t,  \widehat{\set{X}}_t$,  $\widehat{\set{A}}_t$
\State Compute policy $\phi^{t+1}$ using the output, $\{(u^\ell, v^\ell, z^\ell)\}_{\ell=1}^{L_t}$ and $\{b^\ell, (x^\star(b^\ell),\eta^\star(b^\ell)) \}_{\ell = 1}^{L_t}$, of algorithm \ref{alg:pldc_design_2slp}
\State Set $n_{t+1} = \varrho n_t $, $t \leftarrow t+1$ and go to step \ref{drawrhs_ls}
\end{algorithmic}
\end{algorithm}

The inclusion of the optimal instances shows that the policy embeds their optimal bases. Adding suboptimal and infeasible instances to the training data passes their information to the policy construction, and we include those bases in the policy's computation. This way, we sequentially add the new pieces to the policy, which significantly helps control the size of the training problem \eqref{eq:training_problem2slp} and the number of pieces in the policy. It is reasonable to expect that our procedure in Algorithm \ref{alg:seqproc_lshaped} terminates after a finite number of rounds because the number of pieces in the optimal policy $\phi^\star$ is finite and each piece can be recovered using a finite number of right-hand sides. To formalize this claim, we define the stopping times at which the procedure attains the desired levels of feasibility and optimality as follows. 
\begin{subequations}
\begin{align}
    T_{\set{X}}(\rho) & = \inf_{t \geq 1} \{ t \ \vert \ R_{\set{X}}^{n_t}(\phi^t) + \frac{z_{\mu/2}}{\sqrt{4n_t}} \leq \rho \},\\
    T_{f}(\epsilon) & = \inf_{t \geq 1} \{ t \ \vert \ p_{m_t}(\phi^t) + \frac{z_{\mu/2}}{\sqrt{4m_t}} \leq \epsilon \}.
\end{align}
\end{subequations}
The stopping time $T_{\set{X}}(\rho)$ is the round index at which the average number of infeasible instances with marginal error is below a threshold $\rho$, for the first time. Analogously, the stopping time $T_{f}(\epsilon)$ represents the first time where the average number of suboptimal instances with marginal error lies below a threshold $\epsilon$. When Algorithm \ref{alg:seqproc_lshaped} employs SD, $T_f(\epsilon)$ can be modified by replacing $p_{m_t}(\phi^t)$ by $r_{m_t}(\phi^t)$. With these stopping times, we establish the finite round convergence of Algorithm \ref{alg:seqproc_lshaped} in Theorem \ref{thm:finiteroundconv} and defer its proof to Appendix \ref{app:finiteroundproof}.

\begin{theorem}{(Finite Round Convergence)} \label{thm:finiteroundconv} Let $(\cB,\Sigma_\rhsobs,\mathbb{Q})$ be a probability space, where $\mathbb{Q}(b) > 0$, almost surely. Let the support set $\cB$ be decomposed into $\bar{L}$ cells, each corresponding to an optimal basis. In every cell $\ell$, suppose there exists a set $\{b^{\ell,i_0},b^{\ell,i_1},\ldots,b^{\ell,i_{m_1}}\}$ of $m_1 +1$ vectors such that $b^{\ell,i_1}-b^{\ell,i_0},\ldots,b^{\ell,i_{m_1}}-b^{\ell,i_0}$ are linearly independent. Let $\phi^t$ be a policy at round $t$ of Algorithm \ref{alg:seqproc_lshaped}. Then, the following results hold:
\begin{enumerate}
    \item If Algorithm \ref{alg:seqproc_lshaped} invokes L-Shaped at each round, then $\mathbb{P}(T_\set{X}(\rho) < \infty) = 1$ and $\mathbb{P}(T_f(\epsilon) < \infty) = 1$.
    \item Suppose SD is deployed at each round of Algorithm \ref{alg:seqproc_lshaped}. If the number of samples $N$ used to construct out-of-sample function approximation (see \eqref{eq:outofsamplelbs}) is sufficiently large, then $\mathbb{P}(T_\set{X}(\rho) < \infty) = 1$ and $\mathbb{P}(T_f(\epsilon) < \infty) = 1$.
\end{enumerate}
\end{theorem}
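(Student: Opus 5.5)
The plan is a Borel--Cantelli argument that runs in two phases. First, with probability one, the training data eventually contain, for every cell, an affinely spanning set of right-hand sides. Second, once this has happened, Theorem~\ref{thm:pldc_properties} forces the policy to be exact on essentially all of $\cB$. After that, the stopping criteria are met in finitely many further rounds because $n_t=\varrho^t n_0\to\infty$.

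\textbf{Step 1 (covering the cells).} Fix a cell $\cC^\ell$ of the true decomposition of $\cB$ ($\ell\in[\bar L]$). By hypothesis it contains $m_1+1$ affinely independent points, so it has nonempty interior. Since $\mathbb{Q}(b)>0$ almost surely, we get $q_\ell:=\mathbb{Q}(\text{int}\,\cC^\ell)>0$. Fix any $m_1+1$ disjoint small balls inside $\text{int}\,\cC^\ell$ whose centers are affinely independent; taking them small enough, any choice of one point per ball is still affinely independent. Each ball has positive measure. At round $t$ the batch $\widehat{\set{B}}_t$ has $n_t\to\infty$ i.i.d.\ draws. The probability that some ball receives no draw decays geometrically in $n_t$, and $\sum_t(1-q)^{n_t}<\infty$, so Borel--Cantelli applies: almost surely, for all large $t$, every ball of every cell is hit. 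A draw $b$ in such a ball is either handled correctly by $\phi^t$ or appended to the training data. In both cases the optimal basis of its cell $\bfB^\ell$ becomes ``represented'' in the data. To make this rigorous, I would show the following: once $\text{Conv}(\cC^\ell\cap\widehat{\set{B}}_t)$ fails to contain the ball, points there can be sub-optimal with positive probability, so they are appended. Iterating over finitely many balls and cells gives a finite random round $t^\ast$ after which, for every $\ell\in[\bar L]$, the training cell associated with $\bfB^\ell$ contains $m_1+1$ affinely independent points.

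\textbf{Step 2 (exactness after $t^\ast$).} For the L-Shaped variant, the consolidated master~\eqref{eq:consolmasterproblem} built from the accumulated cuts is exact at the training points. The data are nested, because the training set only grows. By Theorem~\ref{thm:pldc_properties} in its 2-SLP form, $\phi^t=(x^\star,\eta^\star)$ on $\bigcup_\ell \text{Conv}(\cC^\ell_t)$. The extra step is to show that on a full-dimensional simplex inside a true cell, the recovered affine piece coincides with $(\bfB^\ell)^{-1}b$ on the whole cell, not only on the hull. I expect this to be the main obstacle, since the max-minus-max form may switch pieces outside the hull. I would first try to enlarge the set from Step 1: for fixed cells, the exceptional set $\cB\setminus\bigcup_\ell\text{Conv}(\cC^\ell_t)$ has $\mathbb{Q}$-measure tending to zero almost surely, because any point outside it is appended with positive probability and the hulls increase toward the cells. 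Hence $R_{\set{X}}(\phi^t)\to0$ and $p(\phi^t)\to0$.

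\textbf{Step 3 (stopping times).} Given $R_{\set{X}}(\phi^t)\le\rho/2$ for $t\ge t^{\ast\ast}$, Hoeffding's inequality gives $\mathbb{P}(R^{n_t}_{\set{X}}>\rho/2+\delta)\le e^{-2n_t\delta^2}$. Together with $z_{\nu/2}/\sqrt{4n_t}\to0$ and Borel--Cantelli, this yields $T_{\set{X}}(\rho)<\infty$ almost surely. On the feasible subset, $m_t\ge(1-\rho)n_t\to\infty$, and the same argument with $p_{m_t}$ gives $T_f(\epsilon)<\infty$, which proves part~1.

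For part~2 (SD), I repeat the argument with the out-of-sample bundle $\widebar{\set A}$. By the discussion after~\eqref{eq:outofsamplelbs}, for $N\ge\max_i N_i$ the terminal solutions are optimal with probability one, so the recovered bases are the true ones and Steps~1--2 carry over. In Step~3, $p_{m_t}$ is replaced by $r_{m_t}$. Once the policy is exact, the hypothesis-test gap is identically zero under common random numbers, so $\text{H}_0$ is trivially accepted and $r_{m_t}=0$ eventually.
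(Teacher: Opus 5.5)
\textbf{The gap.} It sits in Step~2, and the same flaw ends Step~1. Both rest on the claim that a right-hand side outside $H_t:=\bigcup_\ell\text{Conv}(\cC^\ell_t)$ ``is appended with positive probability.'' Algorithm~\ref{alg:seqproc_lshaped} appends only instances that $\phi^t$ renders infeasible or suboptimal. A point outside $H_t$ at which $\phi^t$ happens to be correct is never appended. So neither ``the hulls increase toward the cells'' nor ``every training cell eventually contains $m_1+1$ affinely independent points'' follows.

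What you actually need is $\mathbb{Q}(E_t)\to0$, where $E_t$ is the error set of $\phi^t$. You do have $E_t\subseteq\cB\setminus H_t$. For $b^i,b^j$ in one training cell, the $(i,j)$ and $(j,i)$ constraints of \eqref{eq:training_problem2slp} sum to $0\le0$, so both are tight. This gives exactness at the data and on their cell hulls for \emph{any} feasible training solution. However, Algorithm~\ref{alg:pldc_design_2slp} retrains from scratch each round, so $E_t$ is not monotone. Under your density reading of $\mathbb{Q}(b)>0$, proving $\mathbb{Q}(E_t)\to0$ needs a quantitative ingredient you do not supply. Roughly: the $\approx n_t\mathbb{Q}(E_t)$ i.i.d.\ points of $E_t$ appended at round $t$ must have cell-wise convex hulls covering all but a vanishing fraction of $E_t$, uniformly over the sets $E_t$ that can arise. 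That is a random-polytope estimate.

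\textbf{Comparison with the paper, and a fix.} The paper reads $\mathbb{Q}(b)>0$ as positive point mass and asserts that each element of $\bar{\cD}^\ell$ eventually enters its training cell. It then uses the invertible $Y$ to identify $(\hat{u}^\ell-\hat{v}^\ell)b+\hat{h}^\ell$ with the optimal piece, and concludes $R^{n_t}_{\set{X}}=p_{m_t}=0$ for $t\ge\bar t$. The obstacle you flag is exactly the step the paper covers only by citing Lemma~\ref{lem:single_subgrad_existence}, and it is real. Take scalar $b$, target $\min(b,1)$, and data cells $\{0.1,0.2\}$ and $\{1.8,1.9\}$. The feasible solution of \eqref{eq:policyinequality} with $u^1=1$, $v^1=0$, $u^2=v^2=5$ and $z=0,0,1,1.5$ at the four points gives $\phi(b)=\max(b,5b-7)-\max(0,5b-8)$. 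This has the correct affine piece in each cell and is exact at all four data points, yet $\phi(1.5)=1.5$. So the paper's route would not rescue Step~2 either.

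Read literally, though, the hypothesis allows a short argument that avoids cells entirely. $\mathbb{Q}(\{b\})>0$ for $\mathbb{Q}$-a.e.\ $b$ makes $\mathbb{Q}$ purely atomic with countably many atoms; your Step~1 balls may then have zero mass anyway.
\begin{itemize}
\item An atom $b\in E_t$ drawn at round $t$ is appended. Because the data are nested and the policy is exact at training points, $b\notin E_{t'}$ for all $t'>t$.
\item $\mathbb{P}(b\in E_t,\ b\text{ not drawn at round }t)\le(1-\mathbb{Q}(\{b\}))^{n_t}$, which is summable.
\item By Borel--Cantelli, each atom lies in only finitely many $E_t$, and dominated convergence gives $\mathbb{Q}(E_t)\to0$.
\end{itemize}
Your Step~3 then finishes part~1, without using affine independence at all. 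For part~2 you would also need the re-run SD solution at an appended point to agree with the stored one. That is what the paper invokes its large-$N$ assumption for.
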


Theorem \ref{thm:finiteroundconv} states that the sequential procedure terminates in a finite number of rounds for L-Shaped and SD algorithms. This convergence is guaranteed under the assumption that every cell contains $m_1+1$ vectors such that the difference vectors are linearly independent. Such an assumption is crucial for recovering pieces of the optimal policy. The existence of this follows from the Basis Decomposition Theorem of \cite{walkup1969lifting}. The following remarks clarify this point.
\begin{remark}
    From the Basis Decomposition Theorem, the edges of a closed cell $\cC$ are the $m_1$ linearly independent columns of the constraint matrix $A$. These columns form an optimal basis $\bfB(\cC)$ of $\cC$. Clearly, $\cC$ contains these $m_1$ linearly independent columns of $A$. Let these columns be denoted by $a^{i_{1}},\ldots,a^{i_{m_1}}$. Pick any $b^{i_0} \in \cC$ such that $b^{i_0} \neq a^{i_{j}}$ for all $j \in [m_1]$. Then, compute $b^{i_0} + a^{i_{j}} =: b^{i_j}$. It is evident that $b^{i_1} - b^{i_0},\ldots,b^{i_{m_1}} - b^{i_0}$ are linearly independent. Because $b^{i_0}$ and $a^{i_j}$ lie in a closed convex polyhedral cone $\cC$ formed by the nonnegative linear combinations of the columns $a^{i_1},\ldots,a^{i_{m_1}}$ of A, the sum vector $b^{i_j}$ also belongs to $\cC$. In essence, $\cC$ contains $b^{i_0},b^{i_0} + a^{i_1},\ldots,b^{i_0}+a^{i_{m_1}}$ which in turn certifies the existence assumption stated in Theorem \ref{thm:finiteroundconv}.
\end{remark}
\begin{remark} 
    The sampling procedure in Algorithm \ref{alg:seqproc_lshaped} can be adapted so that cell $\cC$ contains $b^{i_0},b^{i_0} + a^{i_1},\ldots,b^{i_0}+a^{i_{m_1}}$. For instance, suppose the procedure discovers a new basis $\bfB^\ell$ for some $ b'$. We can utilize the corresponding columns of the matrix $A$ to construct the set satisfying the required condition stated in Theorem \ref{alg:seqproc_lshaped}. Note that the number of cells and the corresponding optimal basic variables are not known a priori over the set $\set{B}$. As a result, we cannot construct cells with elements $b^{i_0},b^{i_0} + a^{i_1},\ldots,b^{i_0}+a^{i_{m_1}}$ beforehand. The right-hand sides are observed sequentially, and whenever we observe a new basis, we can use its column vectors to incorporate the required set into the cell.
\end{remark}

\section{Computational Study} \label{sec:experiments}
In this section, we present the results from our numerical experiments where we apply the PLDC policy obtained from the static procedure in Algorithm \ref{alg:pldc_design_2slp} and the sequential procedure in Algorithm \ref{alg:seqproc_lshaped}. We conduct these experiments on five well-known 2-SLP problems whose description is provided in Appendix \ref{sec:testinstances}. All experiments were implemented in Julia. Based on the size of the training problem, we conducted experiments on two machines: (a) 64-bit Intel Core i7-1270P x 16 with 32 GB memory, and (b) AMD EPYC 7763 with 128 CPU cores and approximately 500GB of RAM. We begin by specifying our procedures, including the generation of right-hand sides, the number of perturbed constraints, and the feasibility and optimality tolerances. We then present the detailed results for the test problems. 

\subsection{Training-data generation and validation}
A right-hand-side vector and associated optimal solution from each cell are input to our PLDC policy \eqref{eq:pldc_2slp}. For instances with only a few first-stage constraints (PGP2, CEP, and 20TERM), we generate input data by perturbing all first-stage right-hand sides. On the other hand, for the large-scale problems 4NODE and STORM, we select three and five constraints, respectively, and perturb their right-hand sides. Time series models are often used to estimate parameters that follow a particular trend, accounting for the effects of explanatory variables. In connection to this, we generate components of perturbed right-hand vectors using univariate linear time series model of order one, i.e., the $i$-th right-hand side of $k$-th constraint is computed as $b^i_k = a_{0,k} i + a_{1,k} + e^i_k$, where $a_{0,k}$ and $a_{1,k}$ are the time series parameters and $e^i_k \sim \mathcal{N}(0, \sigma^2_i)$. The parameters $a_{0,k}$ and $a_{1,k}$ are chosen in accordance with the right-hand sides of the perturbed constraints of respective instances. We simulate the time series for $T$ time instants and utilize it in our static and sequential procedures. The Latin Hypercube Sampling (LHS) \citep{mckay-1992} is the second sampling approach employed in our experiments to generate right-hand sides. In this approach, we divide the range of each component of the right-hand-side vector into $T$ non-overlapping intervals. For each component, one value is randomly chosen from each interval. These $T$ values for each component are then randomly paired to create $T$ vectors of dimension $m_1$. LHS uniformly covers the entire range of each component and ensures that values are sampled from each interval within that range. This sampling technique achieves good coverage of the sampling space while reducing the variance of the generated right-hand vectors. 

For the generated right-hand sides, we solve the 2-SLP problem instances using the L-Shaped (or SD) method. The cuts active at the terminal solution of the L-Shaped method are stored for all right-hand sides. If 2-SLPs are solved with SD, the out-of-sample cuts are calculated and stored. With either collection of cuts, we construct the consolidated master problem \eqref{eq:consolmasterproblem} and solve it using Gurobi at each right-hand side. We use the optimal basic feasible solution returned by the solver to construct the cells. The resulting training problem \eqref{eq:training_problem2slp} is modeled in JuMP, a Julia modeling package, and solved with the Gurobi solver. A solution to the training problem provides the policy parameters. If a cell has more than one right-hand side, we pick one of the right-hand sides and the corresponding optimal solution to construct the PLDC policy \eqref{eq:pldc_2slp}.

The policy-prescribed solution is acceptable when it is feasible and optimal within the user-defined tolerance. In light of this, we calculate the number of 2-SLP instances feasible under the policy-prescribed solutions and identify how many of these feasible instances are optimal. In our computational experiments, we solve a total of $T$ 2-SLP instances and use $n$ of them for training. The policy is deployed on the remaining $T-n$ validation instances to measure its performance. The gray rows in all tables represent the performance summary on validation instances, whereas the non-highlighted rows indicate the performance on training instances. While evaluating feasibility, we set the feasibility tolerance to $10^{-6}$ on the absolute feasibility gap, the standard criterion for most linear optimization solvers. The optimality of the feasible solution is assessed using the relative optimality gap, with tolerances of $0.0005$ and $0.01$ for small-scale and large-scale instances, respectively. It should be noted that the policy produces a pair consisting of a first-stage decision $\hat{x}$ and the value of the epigraph variable $\hat{\eta}$ (see the consolidated master problem \eqref{eq:consolmasterproblem}). We verify the feasibility of the first-stage constraints at the solution $\hat{x}$. The decision $\hat{\eta}$ need not be a lower bound on the expected recourse function; therefore, we use the value of the polyhedral function at $\hat{x}$ derived from the cuts present in the consolidated master problem.

\subsection{Numerical Results}
The PLDC policy \eqref{eq:pldc_2slp} prescribes an alternative way to obtain the solutions of a 2-SLP problem whenever a change is observed in the first-stage right-hand sides. Observe that policy construction is a one-time computational effort, whereas obtaining solutions whenever a parameter value of a complex 2-SLP changes requires intensive computations. Hence, using the PLDC policy, a functional characterized by sparse parameters is computationally cheaper. With this note, we present our findings from our numerical experiments conducted on 2-SLP problems described in Table \ref{table:instancesSummary}. We start with a discussion on the policy obtained from the static procedure in Algorithm \ref{alg:pldc_design_2slp} with the L-Shaped method.

\subsubsection{Static Procedure with L-Shaped}
For PGP2 and CEP, we solve $800$ instances using right-hand sides generated by a linear time-series model and the LHS. We use $80\%$ of them for training and use the remaining $20\%$ for validation. Table \ref{tab:ltm_pgp2cep} illustrates the performance of the policy when right-hand sides are generated using a linear time series model. The non-highlighted rows reveal that the policy retains the optimal solutions of training instances that align with the policy's properties stated in Theorem \ref{thm:pldc_properties}. The policy performs well on validation instances, yielding feasible solutions for more than $99\%$ of the problems, all of which are optimal. The number of cells reflects the distinct bases identified by the L-Shaped execution across the $640$ training instances. The results show that the cell count is only a small fraction of the total number of instances, indicating that most right-hand sides have common optimal bases. Consequently, this demonstrates that one need not solve a stochastic program from scratch; instead, one can use the designed policy to obtain a high-quality solution for a new instance. The number of active cuts in Table \ref{tab:ltm_pgp2cep} represents the size of the consolidated set $\mathcal{A}$ and highlights that many instances can share the same optimal pieces.

We find a similar performance for right-hand sides generated via LHS, as evident in Table \ref{tab:lhc_pgp2cep}. However, the percentage of feasible and optimal instances decreases due to the high variability in the right-hand side values. This fact is corroborated by the larger number of cells and active cuts when right-hand sides are generated using LHS, compared to when they are generated using the time-series model. While we should expect $\phi(b^i) = (x^\star(b^i),\eta^\star(b^i))$ for $b^i \in \widehat{\set{B}}$, the training dataset, we observe a slight discrepancy in our results, which can be attributed to numerical approximation errors in solving the training problem. These errors are upto an order $\mathcal{O}(\text{tolerance level})$ in some of the results, e.g., the maximum feasibility gap of a training instance of PGP2 in Table \ref{tab:lhc_pgp2cep} is $\mathcal{O}(10^{-6})$.

\begin{table}[!tb]
\scriptsize
\centering
\begin{subtable}[!t]{0.9\textwidth}
    \resizebox{\textwidth}{!}{%
    \centering
        \begin{tabular}{c!{\vrule width \arrayrulewidth}c c c c c c c}
            \hline
            Instance & Size & \# of cells & \# of active & Feasible & Optimal & Feasibility gap & Optimality gap \\
             Name &  & & cuts & instances & instances &  (Maximum) & (Maximum) \\
            \hline
            \multirow{-1}{*}{PGP2}
             & $640$ & $12$ & $187$ & $100\%$ & $100\%$ & $10^{-6}$ & $0.0003$ \\
            \cellcolor{gray!20} & \cellcolor{gray!20}$160$ & \cellcolor{gray!20}\textendash & \cellcolor{gray!20} \textendash & \cellcolor{gray!20}$99.3\%$ & \cellcolor{gray!20}$100\%$ & \cellcolor{gray!20}$9.5$ & \cellcolor{gray!20}$0.0003$ \\
            \hline
            \multirow{-1}{*}{CEP}
            & $640$ & $3$ & $21$ & $100\%$ & $100\%$ & $10^{-6}$ & $0.0003$ \\
            \cellcolor{gray!20} & \cellcolor{gray!20}$160$ & \cellcolor{gray!20}\textendash & \cellcolor{gray!20}\textendash & \cellcolor{gray!20}$100\%$ & \cellcolor{gray!20}$100\%$ & \cellcolor{gray!20}$10^{-6}$ & \cellcolor{gray!20}$0.0003$ \\
            \hline
        \end{tabular}%
    }
    \caption{right-hand sides generated with linear time series model.} \label{tab:ltm_pgp2cep}
    \end{subtable}
    \hfill
    
    \vspace{0.3cm}
    
\begin{subtable}{0.9\textwidth}
\resizebox{\textwidth}{!}{%
\centering
\begin{tabular}{c!{\vrule width \arrayrulewidth}c c c c c c c}
\hline
Instance & Size & \# of cells & \# of active & Feasible & Optimal & Feasibility gap & Optimality gap \\
 Name &  & & cuts & instances & instances &  (Maximum) & (Maximum) \\
\hline
\multirow{-1}{*}{PGP2}
 & $640$ & $72$ & $416$ & $99.5\%$ & $100\%$ & $3.6 \times 10^{-6}$ & $0.0005$ \\
\cellcolor{gray!20} & \cellcolor{gray!20}$160$ & \cellcolor{gray!20}\textendash & \cellcolor{gray!20} \textendash & \cellcolor{gray!20}$93.1\%$ & \cellcolor{gray!20}$98.6\%$ & \cellcolor{gray!20}$3.98$ & \cellcolor{gray!20}$0.006$ \\
\hline
\multirow{-1}{*}{CEP}
& $640$ & $39$ & $92$ & $100\%$ & $100\%$ & $10^{-6}$ & $0.0003$ \\
\cellcolor{gray!20} & \cellcolor{gray!20}$160$ & \cellcolor{gray!20}\textendash & \cellcolor{gray!20}\textendash & \cellcolor{gray!20}$95\%$ & \cellcolor{gray!20}$98.6\%$ & \cellcolor{gray!20}$231.1$ & \cellcolor{gray!20}$0.0058$ \\
\hline
\end{tabular}%
}
\caption{right-hand sides generated with LHS.}
\label{tab:lhc_pgp2cep}
\end{subtable}
\caption{Feasibility tolerance: $10^{-6}$, Optimality tolerance: $0.0005$.}
\vspace*{-8pt}
\end{table}

The runtime of L-Shaped applied to SAA for 4NODE is high because its second-stage subproblems are time-consuming to solve. So, we solve its SAA for $400$ different right-hand sides generated from a linear time-series model, and we split the instances into training and validation sets with a $50\%-50\%$ split. This process is repeated for four SAA replications. Table \ref{tab:ltm_4node} shows that the policy performs well on $200$ validation instances, with over $90\%$ of these being feasible, and all are indeed optimal. 

Our computational experience with the 20TERM and STORM instances has been noteworthy, yielding compelling observations. The objective of 20TERM exhibits high variability \citep{Sen.Liu-2016} and we observe a similar behavior in our numerical results. The active cuts differ significantly across training instances and lead to a considerable gap in the right-hand-side values of the training problem's constraints, as detailed in the Gurobi coefficient statistics report. This variability leads to an ill-conditioned problem, causing the solver to encounter numerical issues and terminate with the model status set to infeasible. However, the solver logs confirm that the training problem has at least one feasible solution. We address this numerical issue by relaxing the constraints of the training problem. We add a nonnegative auxiliary variable to the left-hand side of each constraint and minimize its magnitude in the objective function. For large-scale and unstable instances such as 20TERM and STORM, we solve the relaxed version of the training problem. These observations reveal another interesting phenomenon that encourages the use of the relaxed training problem. Solutions to SP problems with prohibitively large scenario sizes (or continuous support) are approximate, based on a chosen tolerance, and obtained using a potentially suboptimal basis matrix. Even for right-hand sides in the same cell, different bases may be identified, rendering the training problem infeasible. In these cases, the relaxed form of the training problem can be used to compute the policy efficiently.

We solve $300$ instances of 20TERM and STORM and use $80\%$ of them in the policy construction procedure. The size of the training problem for 20TERM and STORM grows rapidly with the number of data points. To demonstrate the policy's effectiveness, we begin with a small training size of $80$ and progressively increase it to $240$. The policy is computed independently for each training size. We use the same validation instances to compare the policies. The results are reported in Table \ref{tab:ltm_20termstorm}. Given the complexity and scale of 20TERM instance, attaining optimality in $88.9\%$ of the cases is good enough. A similar pattern is observed for STORM, where all feasible instances are optimal, and the maximum absolute feasibility gap is low. More than $99\%$ feasible training instances with training size $240$ indicate that the relaxed form of the training problem \eqref{eq:training_problem2slp} is reliable for unstable and large-scale problems.

\begin{table}[!tb]
\setlength{\tabcolsep}{6pt}
\resizebox{\textwidth}{!}{%
\centering
\begin{tabular}{c!{\vrule width \arrayrulewidth}c c c c c c c}
\hline
Rep & Size & \# of cells & \# of active & Feasible & Optimal & Feasibility gap & Optimality gap \\
 &  & & cuts & instances & instances &  (Maximum) & (Maximum) \\
\hline
\multirow{-1}{*}{1}
 & $200$ & $8$ & $1387$ & $100\%$ & $100\%$ & $10^{-6}$ & 0.00037 \\
\cellcolor{gray!20} & \cellcolor{gray!20}$200$ & \cellcolor{gray!20}\textendash & \cellcolor{gray!20} \textendash & \cellcolor{gray!20}$90.7\%$ & \cellcolor{gray!20}$99.6\%$ & \cellcolor{gray!20}$0.54$ & \cellcolor{gray!20}$0.18$ \\
\hline
\multirow{-1}{*}{2}
& $200$ & $8$ & $1327$ & $100\%$ & $100\%$ & $10^{-6}$ & $0.00033$ \\
\cellcolor{gray!20} & \cellcolor{gray!20}$200$ & \cellcolor{gray!20}\textendash & \cellcolor{gray!20}\textendash & \cellcolor{gray!20}$91.5\%$ & \cellcolor{gray!20}$100\%$ & \cellcolor{gray!20}$1.27$ & \cellcolor{gray!20}$0.00075$ \\
\hline
\multirow{-1}{*}{3}
& $200$ & $7$ & $1274$ & $100\%$ & $100\%$ & $10^{-6}$ & $0.00067$ \\
\cellcolor{gray!20} & \cellcolor{gray!20}$200$ & \cellcolor{gray!20}\textendash & \cellcolor{gray!20}\textendash & \cellcolor{gray!20}$88.3\%$ & \cellcolor{gray!20}$100\%$ & \cellcolor{gray!20}$0.27$ & \cellcolor{gray!20}$0.00038$ \\
\hline
\multirow{-1}{*}{4}
& $200$ & $6$ & $1326$ & $100\%$ & $100\%$ & $10^{-6}$ & $0.00052$ \\
\cellcolor{gray!20} & \cellcolor{gray!20}$200$ & \cellcolor{gray!20}\textendash & \cellcolor{gray!20}\textendash & \cellcolor{gray!20}$98.9\%$ & \cellcolor{gray!20}$100\%$ & \cellcolor{gray!20}$0.62$ & \cellcolor{gray!20}$0.00046$ \\
\hline
\end{tabular}%
}
\caption{Instance name: 4NODE, Right-hand sides generated with linear time series model. Feasibility tolerance: $10^{-6}$, Optimality tolerance: $0.01$. }
\label{tab:ltm_4node}
\vspace*{-10pt}
\end{table}

\begin{table}[!tb]
\setlength{\tabcolsep}{7pt}
\resizebox{\textwidth}{!}{%
\centering
\begin{tabular}{c!{\vrule width \arrayrulewidth}c c c c c c c}
\hline
Instance & Size & \# of cells & \# of active & Feasible & Optimal & Feasibility gap & Optimality gap \\
 Name &  & & cuts & instances & instances &  (Maximum) & (Maximum) \\
\hline
\multirow{-1}{*}{20TERM}
 & $80$ & $29$ & $3276$ & $96.3\%$ & $100\%$ & $9.7$ & $0.00037$ \\
\cellcolor{gray!20} & \cellcolor{gray!20}$60$ & \cellcolor{gray!20}\textendash & \cellcolor{gray!20} \textendash & \cellcolor{gray!20}$43.3\%$ & \cellcolor{gray!20}$61.5\%$ & \cellcolor{gray!20}$22.84$ & \cellcolor{gray!20}$0.044$ \\
\hline
& $160$ & $52$ & $6468$ & $88.1\%$ & $99.3\%$ & $6.14$ & $0.012$ \\
\cellcolor{gray!20} & \cellcolor{gray!20}$60$ & \cellcolor{gray!20}\textendash & \cellcolor{gray!20} \textendash & \cellcolor{gray!20}$63.3\%$ & \cellcolor{gray!20}$94.7\%$ & \cellcolor{gray!20}$52.28$ & \cellcolor{gray!20}$0.053$ \\
\hline
& $240$ & $49$ & $9114$ & $99.2\%$ & $100\%$ & $1.2 \times 10^{-6}$ & $0.0004$ \\
\cellcolor{gray!20} & \cellcolor{gray!20}$60$ & \cellcolor{gray!20}\textendash & \cellcolor{gray!20} \textendash & \cellcolor{gray!20}$75\%$ & \cellcolor{gray!20}$88.9\%$ & \cellcolor{gray!20}$104.3$ & \cellcolor{gray!20}$0.017$ \\
\hline
\multirow{-1}{*}{STORM}
& $80$ & $38$ & $304$ & $100\%$ & $100\%$ & $10^{-6}$ & $0.0005$ \\
\cellcolor{gray!20} & \cellcolor{gray!20}$60$ & \cellcolor{gray!20}\textendash & \cellcolor{gray!20}\textendash & \cellcolor{gray!20}$25\%$ & \cellcolor{gray!20}$100\%$ & \cellcolor{gray!20}$0.21$ & \cellcolor{gray!20}$0.0016$ \\
\hline
& $160$ & $42$ & $465$ & $99.4\%$ & $100\%$ & $0.0018$ & $0.0006$ \\
\cellcolor{gray!20} & \cellcolor{gray!20}$60$ & \cellcolor{gray!20}\textendash & \cellcolor{gray!20}\textendash & \cellcolor{gray!20}$60\%$ & \cellcolor{gray!20}$100\%$ & \cellcolor{gray!20}$0.29$ & \cellcolor{gray!20}$0.0048$ \\
\hline
& $240$ & $57$ & $657$ & $100\%$ & $100\%$ & $10^{-6}$ & $0.0006$ \\
\cellcolor{gray!20} & \cellcolor{gray!20}$60$ & \cellcolor{gray!20}\textendash & \cellcolor{gray!20}\textendash & \cellcolor{gray!20}$63.3\%$ & \cellcolor{gray!20}$100\%$ & \cellcolor{gray!20}$0.7$ & \cellcolor{gray!20}$0.0019$ \\
\hline
\end{tabular}%
}
\caption{Right-hand sides generated with linear time series model. Feasibility tolerance: $10^{-6}$, Optimality tolerance: $0.01$.}
\label{tab:ltm_20termstorm}
\vspace*{-8pt}
\end{table}

\subsubsection{Sequential Procedure with L-Shaped} \label{sec:lshapedseqproc}

We analyze the policy obtained from the sequential procedure in Algorithm \ref{alg:seqproc_lshaped} for PGP2 and CEP. As noted in the previous section, the policy performs well in these instances when the right-hand sides are generated from a linear time-series model. We thus proceed with LHC sampling to generate right-hand sides, as this technique reveals intriguing properties of the sequential procedure. 

We generate a pool of $5,000$ right-hand sides using LHC sampling. At every round of the procedure, $n_t$ number of right-hand sides are drawn randomly with replacement from this pool. We start with an initial number of right-hand sides, $n_0 = 2$, and increase it by a factor of $1.1$ at every round. We set $z_\mu = 1.96$ and use a tolerance of $10^{-4}$ for the confidence interval widths to ensure feasibility and optimality. We run the procedure for at least $20$ rounds and terminate after $80$ rounds. At termination, the policy is tested on a new set of $1000$ instances. We report the results in Table \ref{tab:seqproc_pgp2cep}. The results indicate that the policy performs well on unseen problems, with CI widths within $0.6\%$. The training problems utilize a significantly smaller number of data points than the total number of observations encountered in the procedure. For example, the final training problems for PGP2 and CEP use $250$ and $298$ instances, respectively, out of a total of $n_T \approx 15,000$ instances. This level of reduction is achieved because we append only the instances rendered infeasible or suboptimal by the policy available at the end of a given round. By doing so, we feed only new bases into the training problem while retaining the bases obtained in previous executions.

We examine the impact on the number of cells, the number of active cuts, the training data size, and the distribution of right-hand sides across cells as the procedure progresses. We present the outcome by plotting each of these quantities against the number of rounds, and it is depicted in Figure \ref{figpgp2:seqproc} for PGP2. The plots show that the fractions of feasible and optimal instances converge to $1$ after around $40$ rounds (see Figures \ref{figpgp2:feas}-\ref{figpgp2:opt}). It indicates that the policy has recovered most of the optimal pieces over the support set $\set{B}$. This behavior is further corroborated by the saturation of the number of cells after $60$ rounds, as illustrated in Figure \ref{figpgp2:cells}.
The size of the training set is also stabilized after $60$ rounds, as illustrated in Figure \ref{figpgp2:cells}. We also present the proportion of right-hand sides in each cell (see Figure \ref{figpgp2:celldist}), which shows that the second cell has the highest number of right-hand sides and most cells have less than $1\%$ of the right-hand sides. Similar results are observed for the CEP instance and are presented in Figure \ref{figcep:seqproc} in Appendix \ref{app:seqproccep}.

\begin{table}[!tb]
\scriptsize
\centering
\resizebox{\textwidth}{!}{%
\begin{tabular}{c!{\vrule width \arrayrulewidth}c c c c c c c}
\hline
Instance & Size & \# of cells & \# of active & Feasible & Optimal & Feasibility gap & Optimality gap \\
 Name &  & & cuts & instances & instances &  (Maximum) & (Maximum) \\
\hline
\multirow{-1}{*}{PGP2}
 & $250$ & $101$ & $505$ & $100\%$ & $100\%$ & $10^{-6}$ & $0.0008$ \\
\cellcolor{gray!20} & \cellcolor{gray!20}$1000$ & \cellcolor{gray!20}\textendash & \cellcolor{gray!20} \textendash & \cellcolor{gray!20}$99\%$ $(\pm 0.6\%)$ & \cellcolor{gray!20}$99.6\%$ $(\pm 0.4\%)$ & \cellcolor{gray!20}$2.6$ & \cellcolor{gray!20}$0.022$ \\
\hline
\multirow{-1}{*}{CEP}
& $298$ & $40$ & $127$ & $100\%$ & $100\%$ & $10^{-6}$ & $0.00032$ \\
\cellcolor{gray!20} & \cellcolor{gray!20}$1000$ & \cellcolor{gray!20}\textendash & \cellcolor{gray!20}\textendash & \cellcolor{gray!20}$99.3\%$ $(\pm 0.5\%)$ & \cellcolor{gray!20}$99.8\%$ $(\pm 0.3\%)$ & \cellcolor{gray!20}$55.42$ & \cellcolor{gray!20}$0.0028$ \\
\hline
\end{tabular}%
}
\caption{Right-hand sides generated with LHC sampling. Feasibility tolerance: $10^{-6}$, Optimality tolerance: $0.001$.}
\label{tab:seqproc_pgp2cep}
\end{table}
\begin{figure}[!tb]
    \centering
    \begin{subfigure}[b]{0.25\textwidth}
        \includegraphics[width=\textwidth]{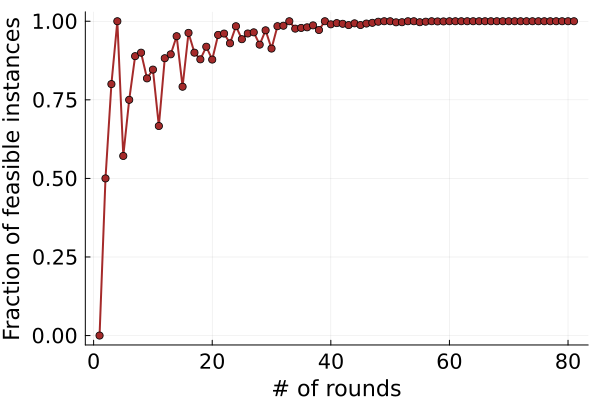}
        \caption{}
        \label{figpgp2:feas}
    \end{subfigure}
    \hfill
    \begin{subfigure}[b]{0.25\textwidth}
        \includegraphics[width=\textwidth]{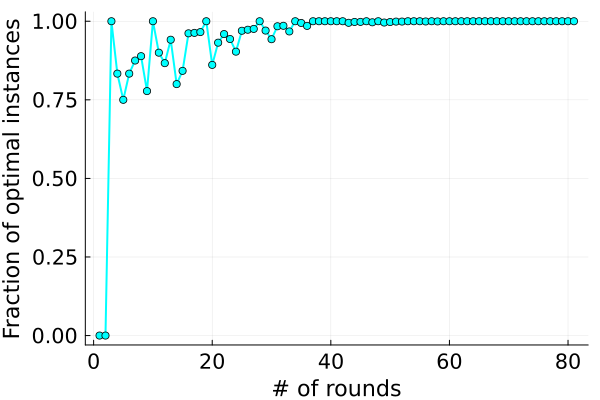}
        \caption{}
        \label{figpgp2:opt}
    \end{subfigure}
        \hfill
    \begin{subfigure}[b]{0.25\textwidth}
        \includegraphics[width=\textwidth]{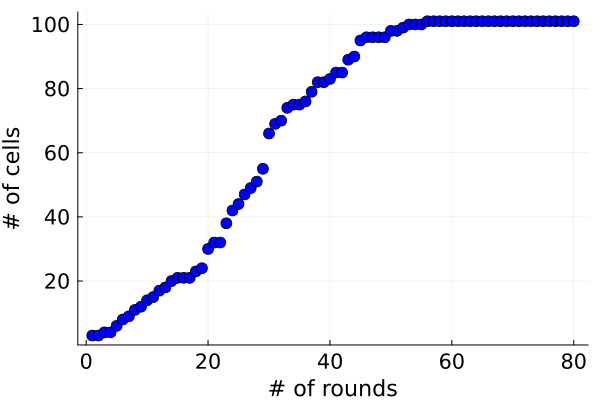}
        \caption{}
        \label{figpgp2:cells}
    \end{subfigure}
    \begin{subfigure}[b]{0.25\textwidth}
        \includegraphics[width=\textwidth]{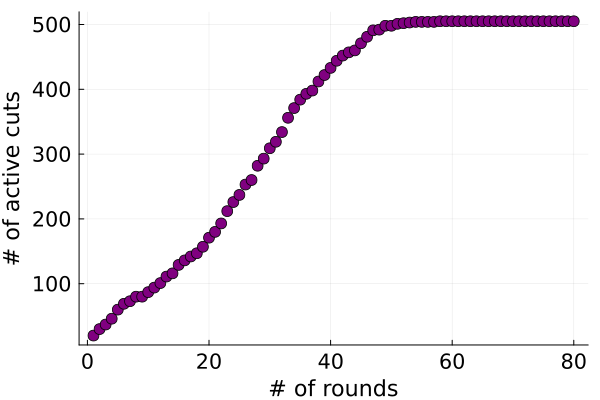}
        \caption{}
        \label{figpgp2:cuts}
    \end{subfigure}
    \hfill
    \begin{subfigure}[b]{0.25\textwidth}
        \includegraphics[width=\textwidth]{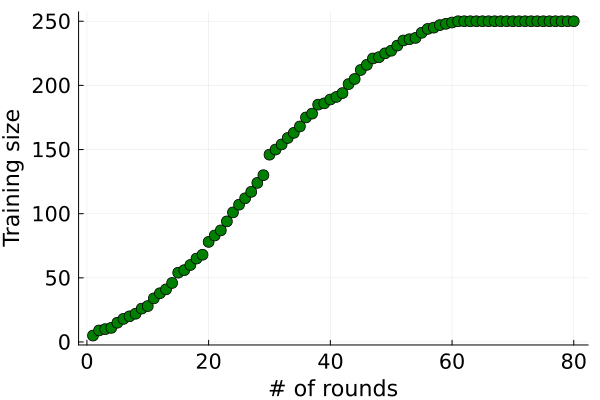}
        \caption{}
        \label{figpgp2:trainsize}
    \end{subfigure}
    \hfill
    \begin{subfigure}[b]{0.25\textwidth}
        \includegraphics[width=\textwidth]{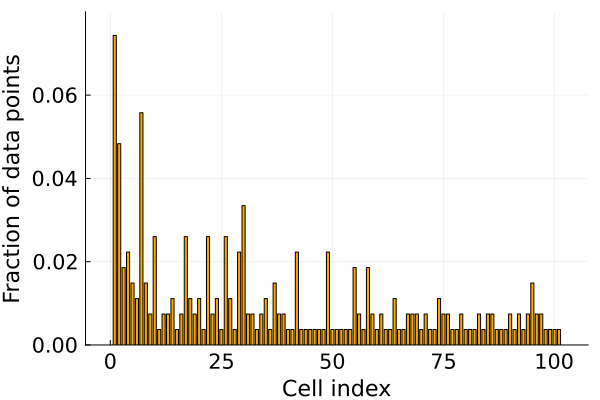}
        \caption{}
        \label{figpgp2:celldist}
    \end{subfigure}
    \caption{(Color online) Stage decomposition method: L-Shaped, Instance name: PGP2, Feasibility tolerance: $10^{-6}$, Optimality tolerance: $0.001$, CI tolerance: $10^{-4}$.}
    \label{figpgp2:seqproc}
    \vspace*{-10pt}
\end{figure}

\subsubsection{Performance of Stochastic Decomposition-Guided Policy}
In this section, we assess the numerical performance of our PLDC policy when the 2-SLPs are solved using the SD method. We use the same four instances, PGP2, CEP, 20TERM, and STORM, described in Table \ref{table:instancesSummary} of the appendix. The right-hand sides for PGP2 and CEP are generated using LHC sampling, while those for 20TERM and STORM are obtained from a linear time-series model, as detailed earlier in this section. The support for second-stage uncertainty is prohibitively large in 20TERM and STORM to recover the exact policy. Therefore, the training problem \eqref{eq:training_problem2slp} can be infeasible. Thus, we relax the constraints of \eqref{eq:training_problem2slp} by adding slack variables to the constraints and minimizing their absolute values. The number of samples used in out-of-sample approximation is set to $2000$ for all the instances. The sample size $M$ for testing the null hypothesis is chosen so that the sample variance of the difference values is below $1\%$.

\textbf{Static Procedure} We solve all the instances for $500$ distinct right-hand sides, out of which we allocate $400$ for the training and $100$ for the policy validation. Table \ref{tab:lhcltm_pgp2cep_sd} demonstrates that all the training instances of PGP2 and CEP are feasible at the policy-prescribed solutions, and the null hypothesis is accepted for all of them. More than $90\%$ of the validation instances are feasible, and the null hypothesis is accepted for more than $80\%$ of the feasible instances. We report the maximum relative mean difference value $\widebar{D}$ across the instances in the last column of the Table \ref{tab:lhcltm_pgp2cep_sd}, which is small for all the instances. This outcome shows that the policy delivers out-of-sample objective value comparable to that at the SD solution. For 20TERM, over $95\%$ of training instances are feasible, with $97\%$ feasibility in validation and policy-prescribed solution acceptance in $96.9\%$ of cases. For STORM, $77\%$ of instances are feasible with a very small maximum feasibility gap, and the policy is acceptable for $98.7\%$ of these.
\begin{table}[!t]
\setlength{\tabcolsep}{6pt}
\resizebox{\textwidth}{!}{%
\centering
\begin{tabular}{c!{\vrule width \arrayrulewidth}c c c c c c c}
\hline
Instance & Size & \# of cells & \# of cuts & Feasible & Null hypothesis & Feasibility gap & Relative mean value \\
 Name &  & &  & instances & accepted &  (Maximum) & difference (Maximum) \\
\hline
\multirow{-1}{*}{PGP2}
 & $400$ & $36$ & $82$ & $100\%$ & $100\%$ & $10^{-6}$ & $10^{-14}$ \\
\cellcolor{gray!20} & \cellcolor{gray!20}$100$ & \cellcolor{gray!20}\textendash & \cellcolor{gray!20} \textendash & \cellcolor{gray!20}$94\%$ & \cellcolor{gray!20}$93.6\%$ & \cellcolor{gray!20}$6.33$ & \cellcolor{gray!20}$0.079$ \\
\hline
\multirow{-1}{*}{CEP}
& $400$ & $34$ & $40$ & $100\%$ & $100\%$ & $10^{-6}$ & $10^{-13}$ \\
\cellcolor{gray!20} & \cellcolor{gray!20}$100$ & \cellcolor{gray!20}\textendash & \cellcolor{gray!20}\textendash & \cellcolor{gray!20}$93\%$ & \cellcolor{gray!20}$83.9\%$ & \cellcolor{gray!20}$67.23$ & \cellcolor{gray!20}$0.023$ \\
\hline
\multirow{-1}{*}{20TERM}
& $400$ & $3$ & $400$ & $97\%$ & $97.7\%$ & $0.87$ & $0.04$ \\
\cellcolor{gray!20} & \cellcolor{gray!20}$100$ & \cellcolor{gray!20}\textendash & \cellcolor{gray!20}\textendash & \cellcolor{gray!20}$97\%$ & \cellcolor{gray!20}$96.9\%$ & \cellcolor{gray!20}$0.80$ & \cellcolor{gray!20}$0.04$ \\
\hline
\multirow{-1}{*}{STORM}
& $400$ & $35$ & $202$ & $95\%$ & $99.7\%$ & $0.012$ & $0.0005$ \\
\cellcolor{gray!20} & \cellcolor{gray!20}$100$ & \cellcolor{gray!20}\textendash & \cellcolor{gray!20}\textendash & \cellcolor{gray!20}$77\%$ & \cellcolor{gray!20}$98.7\%$ & \cellcolor{gray!20}$0.1$ & \cellcolor{gray!20}$0.0003$ \\
\hline
\end{tabular}%
}
\caption{2-SLPs are solved with SD, Feasibility tolerance: $10^{-6}$, Null hypothesis: Mean value difference is $10^{-8}$.}
\label{tab:lhcltm_pgp2cep_sd}
\vspace*{-10pt}
\end{table}

\textbf{Sequential Procedure} Using the experiment set up described in section \ref{sec:lshapedseqproc}, we implement the sequential procedure \ref{alg:seqproc_lshaped} with SD method. Over $98\%$ of the validation instances are feasible at the policy-prescribed solutions with a marginal error of at most $0.5\%$ for both PGP2 and CEP as illustrated in Table \ref{tab:SDseqproc_pgp2cep}. The behavior of key quantities, such as the fraction of feasible and optimal validation instances, the number of cells, the count of cuts in the consolidated master problem, the training size, and the distribution of data points across cells is depicted in Figure \ref{figpgp2SD:seqproc} (also see Figure \ref{figcepSD:seqproc} for CEP in Appendix \ref{app:seqprocSD}). The policy is unstable in the initial rounds, as evident from the oscillating behavior of the fraction of feasible instances in Figure \ref{figpgp2SD:feas}. However, the optimal instance's behavior exhibits fewer oscillations and stabilizes after $20$ rounds because the policy-prescribed solution is acceptable for most of the feasible instances (see Table \ref{tab:SDseqproc_pgp2cep}).
\begin{table}[!htbp]
\resizebox{\textwidth}{!}{%
\begin{tabular}{c!{\vrule width \arrayrulewidth}c c c c c c c}
\hline
Instance & Size & \# of cells & \# of cuts & Feasible & Null hypothesis & Feasibility gap & Relative mean value \\
 Name &  & & & instances & accepted &  (Maximum) & difference (Maximum) \\
\hline
\multirow{-1}{*}{PGP2}
 & $358$ & $74$ & $180$ & $100\%$ & $100\%$ & $10^{-6}$ & $6.8 \times 10^{-9}$ \\
\cellcolor{gray!20} & \cellcolor{gray!20}$1000$ & \cellcolor{gray!20}\textendash & \cellcolor{gray!20} \textendash & \cellcolor{gray!20}$98.2\%$ $(\pm 0.4\%)$ & \cellcolor{gray!20}$98.3\%$ $(\pm 0.4\%)$ & \cellcolor{gray!20}$1.11$ & \cellcolor{gray!20}$0.01$ \\
\hline
\multirow{-1}{*}{CEP}
& $428$ & $38$ & $50$ & $100\%$ & $100\%$ & $10^{-6}$ & $0.00032$ \\
\cellcolor{gray!20} & \cellcolor{gray!20}$1000$ & \cellcolor{gray!20}\textendash & \cellcolor{gray!20}\textendash & \cellcolor{gray!20}$99.4\%$ $(\pm 0.2\%)$ & \cellcolor{gray!20}$99.5\%$ $(\pm 0.2\%)$ & \cellcolor{gray!20}$39.79$ & \cellcolor{gray!20}$0.018$ \\
\hline
\end{tabular}%
}
\caption{2-SLPs are solved with SD, Right-hand sides generated with Latin Hypercube Sampling, Feasibility tolerance: $10^{-6}$, Null hypothesis: Mean value difference is $10^{-8}$.}
\label{tab:SDseqproc_pgp2cep}
\end{table}

\begin{figure}[!tb]
    \centering
    \begin{subfigure}[b]{0.25\textwidth}
        \includegraphics[width=\textwidth]{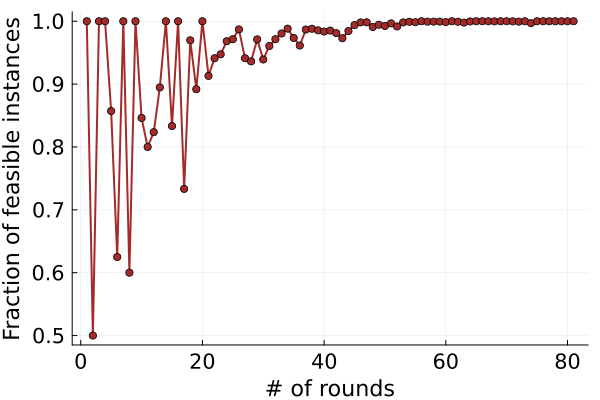}
        \caption{}
        \label{figpgp2SD:feas}
    \end{subfigure}
    \hfill
    \begin{subfigure}[b]{0.25\textwidth}
        \includegraphics[width=\textwidth]{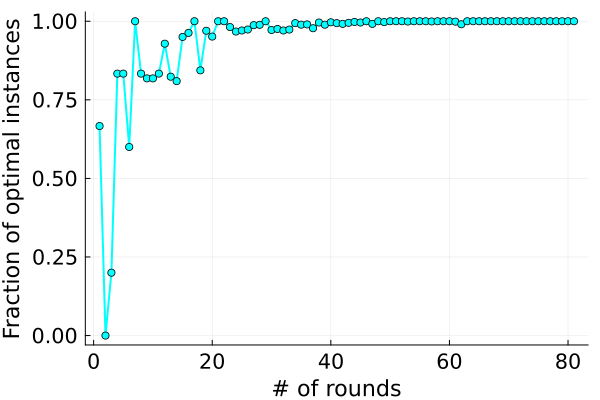}
        \caption{}
        \label{figpgp2SD:opt}
    \end{subfigure}
        \hfill
    \begin{subfigure}[b]{0.25\textwidth}
        \includegraphics[width=\textwidth]{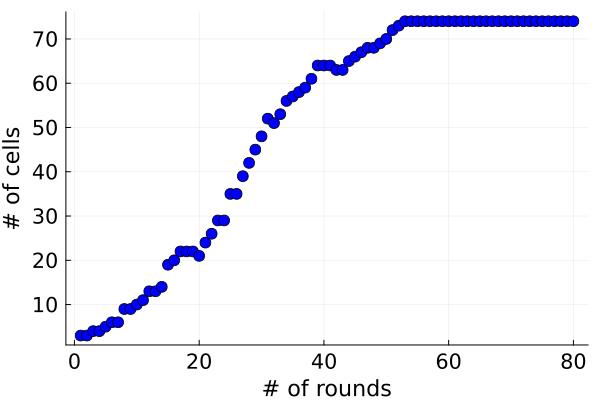}
        \caption{}
        \label{figpgp2SD:cells}
    \end{subfigure}
    \begin{subfigure}[b]{0.25\textwidth}
        \includegraphics[width=\textwidth]{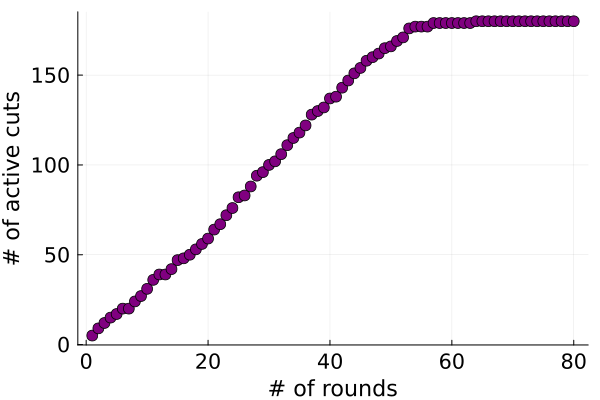}
        \caption{}
        \label{figpgp2SD:cuts}
    \end{subfigure}
    \hfill
    \begin{subfigure}[b]{0.25\textwidth}
        \includegraphics[width=\textwidth]{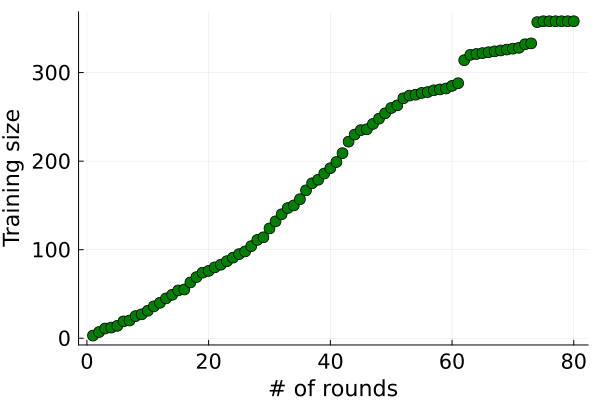}
        \caption{}
        \label{figpgp2SD:trainsize}
    \end{subfigure}
    \hfill
    \begin{subfigure}[b]{0.25\textwidth}
        \includegraphics[width=\textwidth]{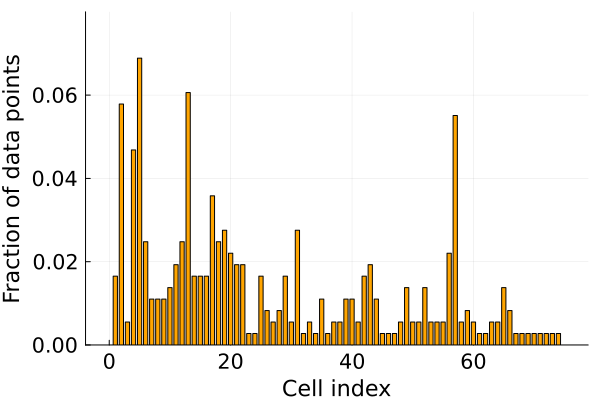}
        \caption{}
        \label{figpgp2SD:celldist}
    \end{subfigure}
    \caption{(Color online) Stage Decomposition Method: SD, Instance name: PGP2, Feasibility tolerance: $10^{-6}$, Relative mean value difference tolerance: $10^{-8}$, CI tolerance: $10^{-4}$.}
    \label{figpgp2SD:seqproc}
\vspace{-8pt}
\end{figure}

\section{Conclusion} \label{sec:conclusion}
In this paper, we developed data-driven PLDC policies to solve 2-SLPs with varying right-hand sides for the first-stage constraints. The policies are trained using information collected from the execution of stage decomposition-based solution algorithms on a set of 2-SLP instances. Specifically, these policies characterize the optimal bases derived from previous solves as vector-valued piecewise-linear functions, with the number of pieces equal to the number of unique optimal bases identified. We proposed a training problem that can be solved using off-the-shelf solvers and whose optimal solution determines the parameters of such piecewise-linear functions.

We presented static procedures for cases in which the training data is collected during execution of the L-shaped and SD methods. These procedures account for differences in solution methods when creating the cells necessary for policy design. We also presented a sequential policy design process that monitors policy quality and allows the addition of new bases information to the policy if needed. The finite-round convergence of this process confirms that our PLDC policy recovers the pieces of the optimal policy over the support set of right-hand-side vectors after solving a finite number of 2-SLP instances. After this round, the policy is guaranteed to deliver a high-quality solution for new instances and, in fact, recovers the optimal solution for right-hand sides that lie within the convex hull of observed cells. The empirical study demonstrates that the policy-prescribed solutions are feasible for a significant number of new instances, and in fact, optimal for these instances. The computational results from the sequential procedure indicate that it efficiently uses the information revealed in new solves to continuously improve the policy's performance. 

It is important to note that the effort required to train the policy is one-time, and repeatedly identifying a high-quality solution from the trained policy requires minimal computational effort, as it only involves evaluating a function with sparse parameters. In contrast, solving 2-SLP problems from scratch every time the parameter values change is computationally expensive. Therefore, utilizing the policy will enable us to prescribe solutions within tight time bounds associated with decision-making settings in practice (recall the ED problem in power systems). 

While the focus of this paper was on the first-stage parameter, a 2-SLP problem is also parametrized by the second-stage uncertainty parameters. Many practical settings that can be modeled as 2-SLP require adapting their solutions to parameter values that drift over time, and it is natural to ask how to design policies in this context. The policy presented in this paper also provides the necessary foundation for extending it to multi-stage SLP, where the right-hand side at each stage is random and depends on the previous-stage decision. We will undertake these intriguing extensions in our future research. 

\section*{Acknowledgment}
We would like to express our sincere gratitude to the U.S Department of Energy-Office of Science, DE-SC0023361, for the financial support to this research. We are also grateful to Southern Methodist University for its support and assistance in providing the necessary computational and working facilities throughout the course of this work.

\bibliographystyle{apalike}
\bibliography{ddpolicies}

\appendix
\section*{Appendix: Omitted Proofs and Numerical Results}

\section{Proof of Lemma \ref{lem:single_subgrad_existence}} \label{appendix:lemmaproof}
\proof{Proof.}
To prove this lemma, it suffices to show the existence of $u^\ell_k$ and $v^\ell_k$ that are valid subgradients of $(\phi_1(\cdot))_k$ and $(\phi_{2}(\cdot))_k$ at all points belonging to the convex hull of $\cC^\ell$. We establish this as follows. Using \cite{kripfganz1987piecewise}, $(\phi_1(b))_k$ can be chosen as a sum of finitely many once-folded PL convex functions. Each once-folded PL convex function has a kink at the intersection of two neighboring cells (see Figure $1$ in \cite{kripfganz1987piecewise}). It implies that $(\phi_1(b))_k$ restricted to any cell is linear. The second function $(\phi_2(b))_k$ can be set to $(\phi_1(b))_k - (\phi(b))_k$. Clearly, $(\phi_2(b))_k$ restricted to any cell is also linear. Therefore, there exists convex functions $(\phi_1(\cdot))_k$ and $(\phi_2(\cdot))_k$ such that restricting them to $\text{Conv}(\cC^\ell)$ are linear functions. Hence, there exists $u^\ell_k \in \partial (\phi_1(b))_k$ and $v^\ell_k \in \partial (\phi_2(b))_k$ for all $b \in \text{Conv}(\cC^\ell)$.\hfill \ifpaper \Halmos \else \qed \fi

\section{Proof of Theorem \ref{thm:finiteroundconv}} \label{app:finiteroundproof}

\proof{Proof.} Consider the collection of $m_1+1$ vectors from a cell $\bar{\cC}^\ell$, denoted as $\bar{\cD}^\ell = \{b^{\ell,i_0},b^{\ell,i_1},\ldots,b^{\ell,i_{m_1}}\}$. By assumption, the vectors $b^{\ell,i_1}-b^{\ell,i_0},\ldots,b^{\ell,i_{m_1}}-b^{\ell,0}$ are linearly independent. The probability of observing each element of $\cB$ is almost surely positive, and consequently, the probability of observing all elements of $\bar{\cD}^\ell$ is one. Furthermore, the number of pieces in the optimal policy is finite. Therefore, there exists a finite round $t^\star$ with probability one, such that the procedure discovers $\bar{L}$ cells, $\cC^1, \cC^2, \ldots, \cC^{\bar{L}}$, and each contains the set $\bar{\cD}^\ell$. It means that, at round $t^\star$, the procedure discovers $\bar{L}$ cells, each containing at least $m_1 +1$ vectors. \par
At round $t$, we have a policy $\phi^t$ and a batch of right-hand side vectors $\widehat{\set{B}}_t$ of size $n_t$. For notational simplicity, we first consider the policy structure described in \eqref{eq:pldc_policy} for LPs, i.e, $\phi^t(b) = x^\star(b)$ and prove the optimal piece recovery using $m_1 +1$ right-hand side vectors. We will exploit it for 2-SLPs, as they have an additional variable $\eta$, that is, $\phi^t(b) = (\hat{x}(b),\hat{\eta}(b))$. \par 

The support set $\set{B}$ is decomposed into $\bar{L}$ cells i.e., $\set{B} = \cup_{\ell=1}^{\bar{L}} \bar{\cC}^\ell$, where $\cC^\ell$ is associated with an optimal basis $\bfB^\ell$. The $\ell$-th piece of the optimal policy is an affine function given by $((\bfB^\ell)^{-1}b)_k + h^{\star,\ell}_k$ for all $k \in B^\ell$ and is a constant zero function for all $k \in N^\ell$. Our first and most crucial step is to prove that there exists a finite round $t^\star$ such that for all $t \geq t^\star$, $(\phi^t(b))_k = ((\bfB^\ell)^{-1}b)_k + h^{\star,\ell}_k$ for all $k \in B^\ell$ and $(\phi^{t}(b))_k = 0$ for all $k \in N^\ell$, for all $\ell \in [\bar{L}]$. This result indicates that the PLDC policy $\phi^t$ has recovered the optimal piece for every cell in $t^\star$ rounds. We prove it next. \par
According to Lemma \ref{lem:single_subgrad_existence}, the policy $\phi^{t^\star}(b)$, when restricted to cell $\cC^\ell$, is an affine function. In other words, for all $b' \in \cC^\ell$, we have
\begin{align*}
(\phi_1(b'))_k & = \inner{u^{\ell}_k, b'-b^\ell} + x^\star(b^\ell)_k + z^\ell_k,  \\
(\phi_2(b'))_k & = \inner{v^{\ell}_k, b'-b^\ell} + z^\ell_k. 
\end{align*}
Using this, we can compute $\phi^{t^\star}(b')$ as follows:
\begin{align}
(\phi^{t^\star}(b'))_k &= (\phi_1(b'))_k -(\phi_2(b'))_k \nonumber \\
& = \inner{u^{\ell}_k - v^{\ell}_k, b'} \nonumber \\
& \ \ \ + \inner{u^{\ell}_k - v^{\ell}_k, -b^\ell} + x^\star(b^\ell)_k \\
& =: \inner{g^\ell_k,b'} + h^\ell_k. \label{eq:phiaffinefunc}
\end{align}
After $t^\star$ rounds, we have $\bar{\cD}^\ell \subseteq \cC^\ell$. Therefore, $\phi^{t^\star}(b^{\ell,i_j}) = x^\star(b^{\ell,i_j})$ for all $j \in \{0\} \cup [m_1]$ from Theorem \ref{thm:pldc_properties}. We also have $\phi^\star(b^{\ell,i_j}) = x^\star(b^{\ell,i_j})$ because $\bar{\cC}^\ell$ is associated with optimal basis $\bfB^\ell$. It implies that $\phi^{t^\star}(b^{\ell,i_j}) - \phi^\star(b^{\ell,i_j}) = 0$ for all $b^{\ell,i_j} \in \bar{\cD}^\ell$. This yields
\begin{align}
\inner{g^\ell_k-g^{\star,\ell}_k,b^{\ell,i_j} } & = h^{\star,\ell}_k - h^\ell_k, \label{eq:policydiffbij}\\
\inner{g^\ell_k-g^{\star,\ell}_k,b^{\ell,i_0}} & = h^{\star,\ell}_k - h^\ell_k. \label{eq:policydiffbi0}
\end{align}
By utilizing \eqref{eq:policydiffbij} and \eqref{eq:policydiffbi0}, we obtain $\inner{g^\ell_k-g^{\star,\ell}_k,b^{\ell,i_j} - b^{\ell,i_0}} = 0$ for all $j \in [m_1]$. This can be expressed in the matrix form as $(g^\ell_k-g^{\star,\ell}_k)^\top Y = \mathbf{0}$, where $Y \in \RR^{m_1 \times m_1}$ is an invertible matrix with its $j$-th column being $b^{\ell,i_j} - b^{\ell,i_0}$. Thus, $(g^\ell_k-g^{\star,\ell}_k)^\top = \mathbf{0}Y^{-1} = \mathbf{0}$, indicating that $g^\ell_k = g^{\star,\ell}_k$. Plugging this into \eqref{eq:policydiffbij}, we get $h^{\star,\ell}_k = h^\ell_k$, which means the offsets of $\phi^{t^\star}$ and $\phi^\star$ are identical. Therefore, at round $t^\star$, the PLDC policy recovers the optimal piece in each cell, meaning for each $\ell$-th cell, 
\begin{align}
(\phi^{t^\star}(b'))_k = ((\bfB^\ell)^{-1}b')_k + (h^{\star,\ell})_k.
\end{align}
Hence, if our procedure observes right-hand side vectors after $t^\star$ rounds, for which $\bfB^\ell$ is an optimal basis, the policy prescribes an optimal solution.\par
\textbf{Proof of part 1.}  The cells for the PLDC policy \eqref{eq:pldc_2slp} are derived from the \eqref{eq:consolmasterproblem} problem. At a given round $t$, the consolidated set $\set{A}$ comprises all pieces of the polyhedral function that were active at the L-Shaped solutions obtained in previous rounds. Since $\cD^\ell \subseteq \cC^\ell$ at round $t^\star$, it follows that $\cA$ includes all pieces that, when incorporated into \eqref{eq:consolmasterproblem}, generate optimal bases for all $b \in \cD^\ell$. Define $\bar{t} := t^\star + 1$. The policy at this round takes the following form for all $b \in \cC^\ell$:
\begin{align}
    \phi^{\bar{t}}(\rhsobs) & = u^\ell \left(\begin{pmatrix} \rhsobs \\ \bfalpha \end{pmatrix} - \begin{pmatrix} b^\ell \\ \bfalpha \end{pmatrix} \right) + \begin{pmatrix} x^\star(\rhsobs^\ell) \\ \eta^\star(b^\ell) \end{pmatrix} + z^\ell  \nonumber \\ 
& \qquad - v^{\ell} \left(\begin{pmatrix} \rhsobs \\ \bfalpha \end{pmatrix} - \begin{pmatrix} b^\ell \\ \bfalpha \end{pmatrix} \right) - z^\ell \nonumber \\
& = (\hat{u}^\ell-\hat{v}^\ell)(b-b^\ell) + \begin{pmatrix} x^\star(\rhsobs^\ell) \\ \eta^\star(b^\ell) \end{pmatrix} \nonumber \\ 
& = (\hat{u}^\ell-\hat{v}^\ell) b - (\hat{u}^\ell-\hat{v}^\ell) b^\ell + \begin{pmatrix} x^\star(\rhsobs^\ell) \\ \eta^\star(b^\ell) \end{pmatrix}, \nonumber\\
& =: \hat{g}^\ell b + \hat{h}^\ell. \label{eq:pldc_2slp_cell}
\end{align}
Here, $\hat{u}^\ell$ and $\hat{v}^\ell$ are the matrices of dimension $(d_x+1) \times m_1$, obtained by extracting first $m_1$ columns of matrices $u^\ell$ and $v^\ell$ respectively. Notice that $\phi^{\bar{t}}(b)$ computes the first-stage decision $x$ and epigraph variable $\eta$. However, $\phi^\star(b)$ yields values of slack variables as well corresponding to inequality constraints in \eqref{eq:consolmasterproblem}. Since, we are interested only in $(x,\eta)$, we focus on the indices of these variables only. From now onward, $B^\ell$ and $N^\ell$ denote the basic and nonbasic variables indices in $(x,\eta)$. The optimal policy $\phi^\star$ over a cell $\cC^\ell$ can be expressed as
\begin{align}
(\phi^\star(b))_k & = ((\bfB^\ell)^{-1}[b;\mathbf{\alpha}])_k + \tilde{h}^{\ell}_k \nonumber \\
& =: g^{\star,\ell}_kb + h^{\star,\ell}_k,  \label{eq:optpolcell}
\end{align}
where $\mathbf{\alpha} \in \RR^{\vert \set{A} \vert}$ and $g^{\star,\ell}_k$ is a row vector of size $m_1$. From Theorem \ref{thm:pldc_properties} and using the fact that $\bar{t} \geq t^\star+1$, we get 
\begin{align}
(\phi^{\bar{t}}(b^{\ell,i_j}))_k - (\phi^\star(b^{\ell,i_j}))_k = 0, \label{eq:pldcpoloptpoldiff}
\end{align}
for all $b^{\ell,i_j} \in \bar{\cD}^\ell, k \in B^\ell \cup N^\ell$. Upon substituting \eqref{eq:pldc_2slp_cell} and \eqref{eq:optpolcell} in \eqref{eq:pldcpoloptpoldiff}, we get
\begin{align}
(\hat{g}^\ell_k - g^{\star,\ell}_k) b^{\ell,i_j}  & =  h^{\star,\ell}_k - \hat{h}^\ell_k, \ \forall \ j \in [m_1], \label{eq:diffblij} \\
(\hat{g}^\ell_k - g^{\star,\ell}_k)b^{\ell,i_0} & = h^{\star,\ell}_k - \hat{h}^\ell_k. \label{eq:diffbli0}
\end{align}
Combining \eqref{eq:diffblij}-\eqref{eq:diffbli0}, we get
\begin{align}
(\hat{g}^\ell_k - g^{\star,\ell}_k) (b^{\ell,i_j} - b^{\ell,i_0}) = 0 \ \forall \ j \in [m_1].
\end{align}
Using previous arguments, row vector $(\hat{g}^\ell_k - g^{\star,\ell}_k)$ must be equal to zero as it is orthogonal to a basis of $m_1$ dimensional space. Therefore, $\hat{g}^\ell_k =g^{\star,\ell}_k$. Substituting it \eqref{eq:diffblij}, we obtain $h^{\star,\ell}_k = \hat{h}^\ell_k$. Hence, after $\bar{t}$ rounds, the PLDC policy recovers the optimal piece of every cell. \par

Therefore, $R_{\set{X}}^{n_{t}}(\phi^{t}) = 0$ and $p_{m_t}(\phi^t) = 0$ almost surely, for all $t \geq \bar{t}$. We have a feasibility stopping time $T_{\set{X}}(\epsilon) = \inf_{t \geq 1} \{ t \ \vert \ R_{\set{X}}^{n_t}(\phi^t) + \frac{z_{\mu/2}}{\sqrt{4n_t}} \leq \epsilon \}$. Then,
\begin{align*}
& \mathbb{P}(T_{\set{X}}(\epsilon) = \infty) \\
& \leq \lim_{t \to \infty} \mathbb{P} \Big( R_{\set{X}}^{n_t}(\phi^t) + \frac{z_{\mu/2}}{\sqrt{4n_t}} > \epsilon \Big) \\
& = 0.
\end{align*}
The last equality holds because $R_{\set{X}}^{n_t}(\phi^t) = 0$ for all $t \geq \bar{t}$ and $\frac{z_{\mu/2}}{\sqrt{4n_t}} \to 0$  as $t \to \infty$. Hence, $\mathbb{P}(T_{\set{X}}(\epsilon) < \infty) = 1$. Similarly, consider an optimality stopping time
\begin{align}
    T_{f}(\epsilon) & = \inf_{t \geq 1} \{ t \ \vert \ p_{m_t}(\phi^t) + \frac{z_{\mu/2}}{\sqrt{4m_t}} \leq \epsilon \}.
\end{align}
Using the fact that $p_{m_t}(\phi^t) = 0$ almost surely for all $t \geq \bar{t}$, we get $\mathbb{P}(T_{f}(\epsilon) < \infty) = 1$. This completes the proof of part $1$. \par
\textbf{Proof of part 2.} Recall that $\widehat{\set{B}}$ is the training data of right-hand side vectors at round $t$ of Algorithm \ref{alg:seqproc_lshaped}. The optimal value of 2-SLP \eqref{eq:2slp} at $b = b^i$, denoted by $f^\star(b^i)$, satisfies $f^\star(b^i) \leq c^\top x^\star(b^i)+ \expect{Q(x^\star(b^i),\rv)}{}$ for all $b^i \in \widehat{\set{B}}$. Since $x^\star(b^i)$ is optimal almost surely, the upper bound is within an $\epsilon$-neighborhood of $f^\star(b^i)$ (see Theorem 2 in \citep{Higle.Sen-1991} and Theorem 5 in \citep{Higle.Sen-1994}). For a given $\epsilon' ( < \epsilon)$ and any $b^i \in \widehat{\set{B}}$, there exists a sample size $N_i$ such that the sample variance of $\{Q(x^\star(b^i),\obs^j)\}_{j\in [N_i]}$ is below a threshold $\epsilon'$. Therefore, at a given round $t$, we can choose $N \geq \max_{i \in \vert \widehat{\set{B}} \vert } \{N_i\}$ to compute out-of-sample approximation \eqref{eq:outofsamplelbs}. Using the sample size $N$, the variance of $\{Q(x^\star(b^i),\obs^j)\}_{j\in [N]}$ is less than a threshold $\epsilon'$ for all $b^i \in \widehat{\set{B}}$. As a result, the out-of-sample function $c^\top x^\star(b^i) + \frac{1}{N}\sum_{j=1}^NQ(x^\star(b^i),\obs^j)$ is a high quality estimate for all $b^i \in \widehat{\set{B}}$, in the sense that out-of-sample evaluation lies in the $\epsilon$-neighborhood of $f^\star(b^i)$. This implies that the affine pieces in $\bar{\set{A}}$ are also of high quality for all right-hand sides belonging to set $\widehat{\set{B}}$. Now using the arguments made earlier, we have $\cD^\ell \subseteq \cC^\ell$ for all $t \geq t^\star$. Therefore, the consolidated problem \eqref{eq:consolmasterproblem} constructed with $\bar{\set{A}}$ yields bases for all $b \in \cD^\ell$ which are optimal with probability one, i.e, $\phi^t(b) = (x^\star(b),\eta^\star(b))$.

By repeating the steps discussed in part 1 to recover the optimal piece, the PLDC policy $\phi^t(b)$ equals the statistically optimal piece in every cell for all $t \geq t^\star+1 =: \bar{t}_{sd}$. It means after a finite number of rounds $\bar{t}_{sd}$, the policy-prescribed solution $\hat{x}^t(b)$ equals the SD solution $x^\star(b)$ for all $b \in \set{B}$. Hence, the function values $c^\top \hat{x}^t(b) + Q(\hat{x}^t(b),\obs^s)$ and $c^\top x^\star(b) + Q(x^\star(b),\obs^s)$ are equal almost surely for all samples $s \in [M]$ used to calculate the mean estimator of gap $f(x^\star(b))-f(\hat{x}^t(b))$. As a result, the null hypothesis is accepted with high probability and hence $r_{m_t}(\phi^t) = 0$ for all $t \geq \bar{t}_{sd}$ with probability one. Thus, $\mathbb{P}(T_{f}(\epsilon) = \infty) = 0$.
\hfill \ifpaper \Halmos \else \qed \fi

\section{Test Instances} \label{sec:testinstances}
We use instances of five problems, denoted PGP2, CEP, 4NODE, 20TERM, and STORM, for our experiments. The first is a power generation problem, while the second addresses a capacity expansion planning problem. Both instances are small-scale in terms of the first-stage variables and constraints, as well as the number of second-stage scenarios. However, they exhibit enough variability in the optimal bases for different right-hand sides of the first-stage constraints. The instance 4NODE is a freight fleet scheduling problem. Due to its large scenario size, we solve its Sample Average Approximation (SAA) using the L-Shaped method. 20TERM is a freight transportation problem, and STORM is an air freight scheduling problem. We solve the SAA using the L-Shaped for 20TERM and STORM because they have an exponential number of scenarios. 20TERM and STORM are challenging problems because of their larger sets of dual vertices of the scenario subproblems, as noted in \cite{Sen.Liu-2016}. This results in numerous, vastly different active cuts across the multiple solves, leading to millions of constraints and variables in the training problem. Thus, we determine the policies for 20TERM and STORM on high-performance computing resources. Table \ref{table:instancesSummary} lists the relevant characteristics of the test instances. We refer the reader to \cite{Higle.Sen-1996} for more details about these problems.
\begin{table}[!h]
\centering
\begin{tabular}{c!{\vrule width \arrayrulewidth}c c c c c c}
\hline
Instance & $1$st stage & $2$nd stage & $\#$ of right-hand side & Solve & \\
name & variables/constraints & scenario size & perturbed & SAA (size) \\
\hline
 PGP2$^*$ & $4/2$ & $576$ & $2$ & \xmark(\textendash) &  \\
 CEP$^*$ & $8/5$ & $216$ & $5$ & \xmark(\textendash) &  \\
 4NODE$^*$ & $52/14$ & $32,768$  & $3$ & \cmark $(1000)$ &  \\
 $\text{20TERM}^{\dagger}$ & $63/3$  & $10^{12}$ & $3$ & \cmark $(500)$ & \\
 $\text{STORM}^{\dagger}$ & $121/185$ & $10^{81}$  & $5$ & \cmark $(500)$ & \\
 \hline
\end{tabular}
\caption{Test instances used in the experiments. $*$: policy computed on ordinary laptop machine, $\dagger$: policy computed on high-performance computing machine.}
\label{table:instancesSummary}
\end{table}

\section{Advantages over standard prediction models} \label{appendix:learningPL}
A piecewise linear model can be trained using the training set $\{b^i,x^\star(b^i)\}_{i=1}^n$. For example, \cite{siahkamari2020piecewise} learns the parameters of the PL function by solving a convex problem with linear constraints. If we find these parameters using our training problem \eqref{eq:training_problem}, we get the policy
\begin{align}
\phi(\rhsobs)_k & = \max_{j \in [n]}(u^j_k)^\top (\rhsobs -b^j) + x^\star(\rhsobs^j)_{k} + z^j_k \nonumber \\ 
& \qquad - \max_{j \in [n]} (v^{j}_k)^\top (\rhsobs -b^j)+ z^j_k, \label{eq:policy_npieces}
\end{align}
which exactly resembles the policy developed in \cite{siahkamari2020piecewise}. However, policy \eqref{eq:policy_npieces} suffers from overfitting issues as demonstrated in the Table \ref{tab:policycomp}. For the mean value problem of pgp2, we determine our PLDC policy \eqref{eq:pldc_policy} and the policy described in \eqref{eq:policy_npieces}. It is evident from Table \ref{tab:policycomp} that the PLDC policy performs well and improves as we increase the training size. However, all the validation instances are infeasible at the solution predicted by the policy \eqref{eq:policy_npieces}. This behavior is permissible because when the number of data points increases, the parameter size escalates and the training problem has a broad flexibility to choose the parameters, i.e., it can choose one piece per data point, making it prone to overfitting. 
Therefore, in contrast to machine learning predictions, our goal is to devise the policies that retain the linear pieces associated with each distinct optimal basis found in previous solves. We utilize information from previous solves of decomposition algorithms to enable the policy for prescribing feasible or near-optimal solutions. Our aim is not merely to learn the parameters of a piecewise linear function, but to store previous solve information in a compact form that can be efficiently accessed for future changes to the right-hand side vectors. 
Neural networks are more sophisticated black-box models widely used for prediction. Likewise, in the policy \eqref{eq:policy_npieces}, the training process using a neural network is uninformed of the large number of constraints and the structural properties of the stochastic programs. 
\begin{table}[!tb]
\centering
\resizebox{\textwidth}{!}{%
\begin{tabular}{c!{\vrule width \arrayrulewidth}c c c c c c}
\hline
Policy & Size & \# of cells & Feasible & Optimal & Feasibility gap & Optimality gap \\
 &  & & instances & instances &  (Maximum) & (Maximum) \\
\hline
\multirow{-1}{*}{This work}
 & $160$ & $8$ & $100\%$ & $100\%$ & $10^{-6}$ & $0.0$ \\
\cellcolor{gray!20} & \cellcolor{gray!20}$40$ & \cellcolor{gray!20}\textendash & \cellcolor{gray!20}$100\%$ & \cellcolor{gray!20}$97.5\%$ & \cellcolor{gray!20}$10^{-6}$ & \cellcolor{gray!20}$0.002$ \\
\hdashline
\multirow{-1}{*}{Eq. \ref{eq:policy_npieces}}
& $160$ & \textendash & $100\%$ & $100\%$ & $10^{-6}$ & $0.0$ \\
\cellcolor{gray!20} & \cellcolor{gray!20}$40$ & \cellcolor{gray!20}\textendash & \cellcolor{gray!20}$0.0\%$ & \cellcolor{gray!20}$0.0\%$ & \cellcolor{gray!20}$1676.48$ & \cellcolor{gray!20} \textendash \\
\hline
\multirow{-1}{*}{This work}
 & $320$ & $7$ & $100\%$ & $100\%$ & $10^{-6}$ & $0.0$ \\
\cellcolor{gray!20} & \cellcolor{gray!20}$80$ & \cellcolor{gray!20}\textendash & \cellcolor{gray!20}$98.75\%$ & \cellcolor{gray!20}$100\%$ & \cellcolor{gray!20}$3.92$ & \cellcolor{gray!20}$0.0$ \\
\hdashline
\multirow{-1}{*}{Eq. \ref{eq:policy_npieces}}
& $320$ & \textendash & $100\%$ & $100\%$ & $10^{-6}$ & $0.0$ \\
\cellcolor{gray!20} & \cellcolor{gray!20}$80$ & \cellcolor{gray!20}\textendash & \cellcolor{gray!20}$0.0\%$ & \cellcolor{gray!20}$0.0\%$ & \cellcolor{gray!20}$758.35$ & \cellcolor{gray!20} \textendash \\
\hline
\multirow{-1}{*}{This work}
 & $480$ & $9$ & $100\%$ & $100\%$ & $10^{-6}$ & $0.0$ \\
\cellcolor{gray!20} & \cellcolor{gray!20}$120$ & \cellcolor{gray!20}\textendash & \cellcolor{gray!20}$99.17\%$ & \cellcolor{gray!20}$100\%$ & \cellcolor{gray!20}$0.63$ & \cellcolor{gray!20}$0.0$ \\
\hdashline
\multirow{-1}{*}{Eq. \ref{eq:policy_npieces}}
& $480$ & \textendash & $100\%$ & $100\%$ & $10^{-6}$ & $0.0$ \\
\cellcolor{gray!20} & \cellcolor{gray!20}$120$ & \cellcolor{gray!20}\textendash & \cellcolor{gray!20}$0.0\%$ & \cellcolor{gray!20}$0.0\%$ & \cellcolor{gray!20}$1548.42$ & \cellcolor{gray!20} \textendash \\
\hline
\end{tabular}%
}
\caption{Right-hand sides generated uniformly at random. Feasibility tolerance: $10^{-6}$, Optimality tolerance: $0.001$.}
\label{tab:policycomp}
\end{table}

\section{Results of Sequential Procedure \ref{alg:seqproc_lshaped} with L-Shaped Method} \label{app:seqproccep}

\begin{figure}[H]
    \centering
    \begin{subfigure}[b]{0.27\textwidth}
        \includegraphics[width=\textwidth]{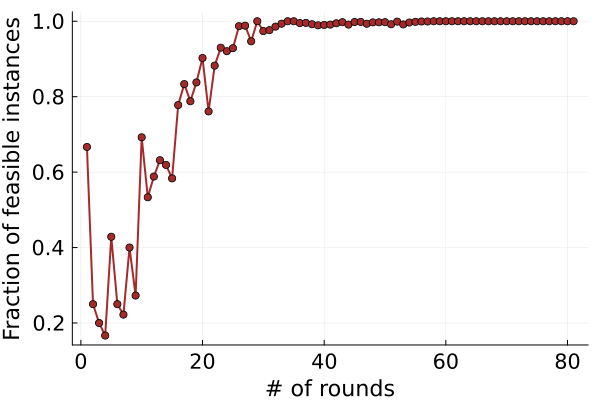}
        \caption{}
        \label{figcep:feas}
    \end{subfigure}
    \hfill
    \begin{subfigure}[b]{0.27\textwidth}
        \includegraphics[width=\textwidth]{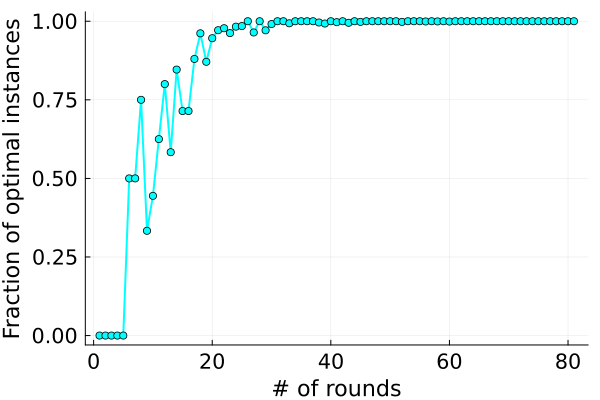}
        \caption{}
        \label{figcep:opt}
    \end{subfigure}
        \hfill
    \begin{subfigure}[b]{0.27\textwidth}
        \includegraphics[width=\textwidth]{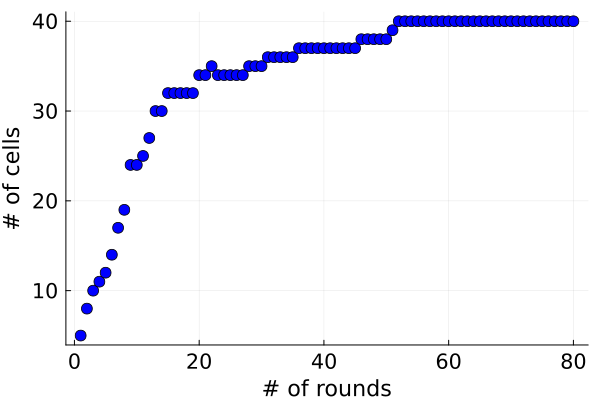}
        \caption{}
        \label{figcep:cells}
    \end{subfigure}

    \vskip\baselineskip

    \begin{subfigure}[b]{0.27\textwidth}
        \includegraphics[width=\textwidth]{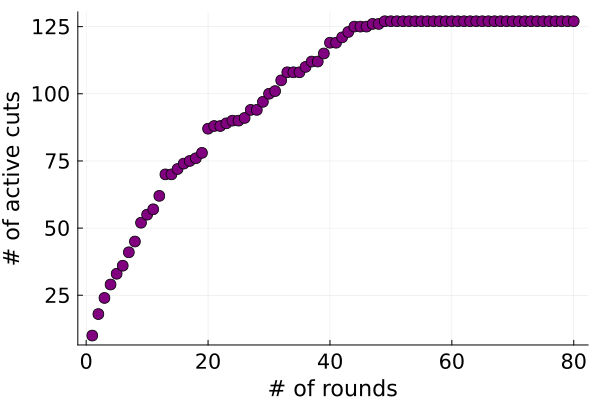}
        \caption{}
        \label{figcep:cuts}
    \end{subfigure}
    \hfill
    \begin{subfigure}[b]{0.27\textwidth}
        \includegraphics[width=\textwidth]{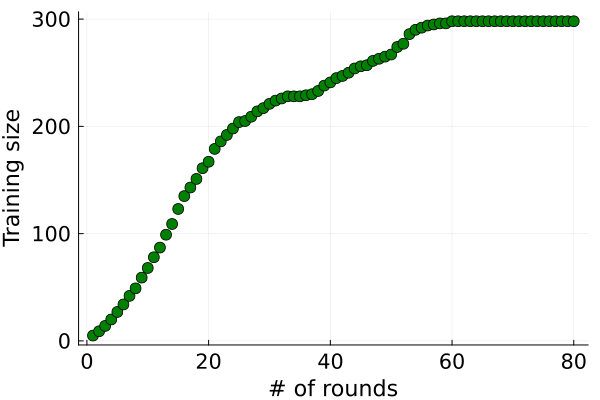}
        \caption{}
        \label{figcep:trainsize}
    \end{subfigure}
    \hfill
    \begin{subfigure}[b]{0.27\textwidth}
        \includegraphics[width=\textwidth]{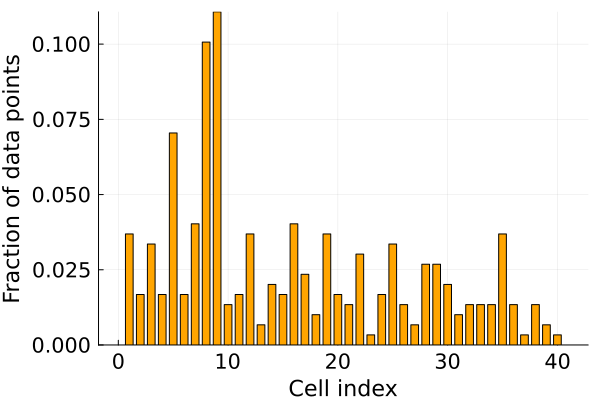}
        \caption{}
        \label{figcep:celldist}
    \end{subfigure}

    \caption{Stage decomposition method: L-Shaped, Instance name: CEP, Feasibility tolerance: $10^{-6}$, Optimality tolerance: $0.001$, CI tolerance: $10^{-4}$.}
    \label{figcep:seqproc}
\end{figure}

\section{Results of Sequential Procedure \ref{alg:seqproc_lshaped} with Stochastic Decomposition Method} \label{app:seqprocSD}

\begin{figure}[H]
    \centering
    \begin{subfigure}[b]{0.27\textwidth}
        \includegraphics[width=\textwidth]{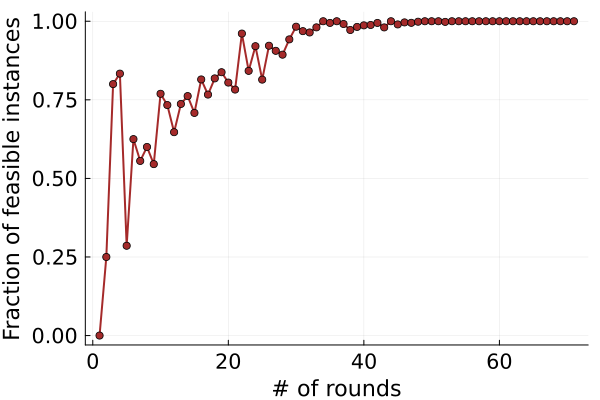}
        \caption{}
        \label{figcep2SD:feas}
    \end{subfigure}
    \hfill
    \begin{subfigure}[b]{0.27\textwidth}
        \includegraphics[width=\textwidth]{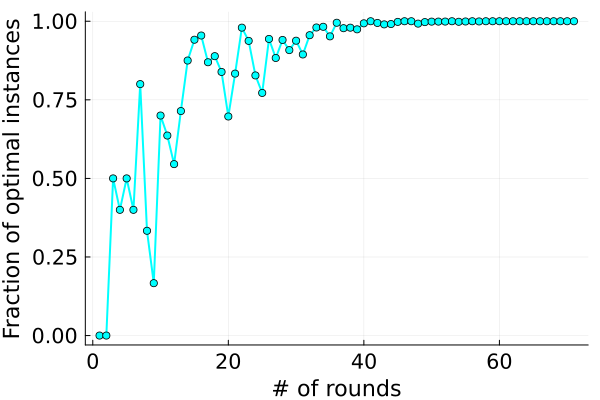}
        \caption{}
        \label{figcepSD:opt}
    \end{subfigure}
        \hfill
    \begin{subfigure}[b]{0.27\textwidth}
        \includegraphics[width=\textwidth]{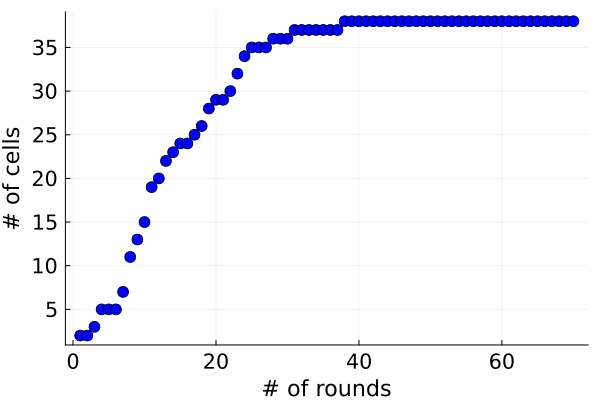}
        \caption{}
        \label{figcepSD:cells}
    \end{subfigure}

    \vskip\baselineskip

    \begin{subfigure}[b]{0.27\textwidth}
        \includegraphics[width=\textwidth]{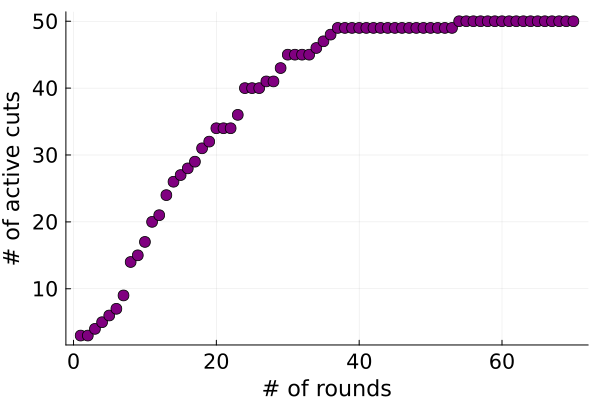}
        \caption{}
        \label{figcep2SD:cuts}
    \end{subfigure}
    \hfill
    \begin{subfigure}[b]{0.27\textwidth}
        \includegraphics[width=\textwidth]{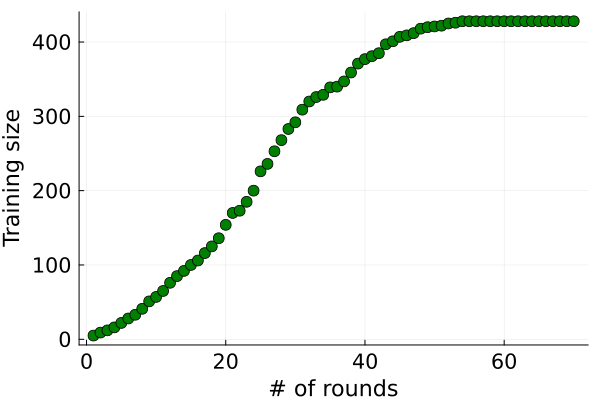}
        \caption{}
        \label{figcep2SD:trainsize}
    \end{subfigure}
    \hfill
    \begin{subfigure}[b]{0.27\textwidth}
        \includegraphics[width=\textwidth]{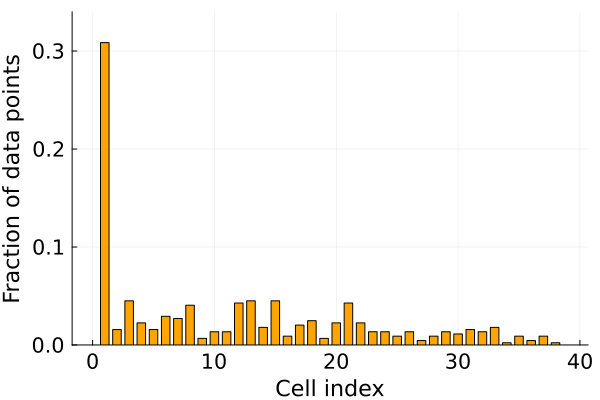}
        \caption{}
        \label{figcep2SD:celldist}
    \end{subfigure}
    \caption{Stage Decomposition Method: SD, Instance name: CEP, Feasibility tolerance: $10^{-6}$, Relative mean value difference tolerance: $10^{-8}$, CI tolerance: $10^{-4}$.}
    \label{figcepSD:seqproc}
\end{figure}


\end{document}